\newtheorem{thm}{Theorem}[section]
 \newtheorem{lem}[thm]{Lemma}
 \newtheorem{prop}[thm]{Proposition}
 \newtheorem{defn}[thm]{Definition}
 \newtheorem{rem}[thm]{Remark}
\newcommand{\f}{\frac}
\newcommand{\Real}{{\bf R}}
\newcommand{\eps}{\varepsilon}
\def\R{{\bf R}}
\def\N{{\bf N}}
\def\d{\displaystyle}
\def\e{{\varepsilon}}
\def\vp{\varphi}
\journal{Journal of \LaTeX\ Templates}
\begin{document}

\begin{frontmatter}

\title{Lifespan of semilinear generalized Tricomi equation with Strauss type exponent}

\author{Jiayun Lin
\footnote{
Department of Mathematics and Science, School of Sciences, Zhejiang Sci-Tech University, 310018, Hangzhou, P.R.China.
 email: jylin@zstu.edu.cn.}\quad
Ziheng Tu
\footnote{School of Data Science, Zhejiang University of Finance and Economics, 310018, Hangzhou, P.R.China.
 e-mail: tuziheng@zufe.edu.cn.}
}

\begin{abstract}
In this paper, we consider the blow-up problem of semilinear generalized Tricomi equation. Two blow-up results with lifespan upper bound are obtained under subcritical and critical Strauss type exponent. In the subcritical case, the proof is based on the test function method and the iteration argument. In the critical case, an iteration procedure with the slicing method is employed. This approach has been successfully applied to the critical case of semilinear wave equation with perturbed Laplacian or the damped wave equation of scattering damping case. The present work gives its application to the generalized Tricomi equation.
\end{abstract}

\begin{keyword}
Generalized Tricomi equation; Semilinear; Lifespan.

\MSC[2010] 35L71, secondary 35B44

\end{keyword}

\end{frontmatter}

\section{Introduction and Main result}
In this paper, we consider following initial value problem of semilinear generalized Tricomi equation
\begin{equation}\label{main}
\left\{
\begin{array}{ll}
u_{tt}-t^m\Delta u=|u|^p\ &(x,t)\ \in\ \mathbb{R}^n\times[0,\infty),\\
u(x,0)=\varepsilon u_0(x),\ u_t(x,0)=\varepsilon u_1(x)\ &x\ \in\ \mathbb{R}^n,\\
\end{array}
\right.
\end{equation}
where $m>0$, $u_0,\ u_1\in C_0^{\infty}(\mathbb{R}^n)$ and $n\in N$. Throughout this paper, we assume that $\varepsilon>0$ is a small parameter and the initial data $(u_0,\ u_1)$ has compact support
\begin{equation}
\mbox{supp}\ (u_0,\ u_1)\subset\left\{x\left|\ |x|\leqslant R\right.\right\}.
\end{equation}
We concern on the blow-up problem when the nonlinearity is critical. In addition, the lifespan upper bound for both critical and sub-critical case will be provided.

Before introducing the main results, we recall some related results from the historical point of view. When $m=0$, the Cauchy problem (\ref{main}) would reduce to semilinear wave equation:
\begin{equation}\label{wave}
\left\{
\begin{array}{ll}
u_{tt}-\Delta u=|u|^p\ &(x,t)\ \in\ \mathbb{R}^n\times[0,\infty),\\
u(x,0)=\varepsilon u_0(x),\ u_t(x,0)=\varepsilon u_1(x)\ &x\ \in\ \mathbb{R}^n.\\
\end{array}
\right.
\end{equation}
It is well-known that W. Strauss \cite{Strauss} made the conjecture that for \eqref{wave} there exists the critical exponent $p_S(n):=\frac{n+1+\sqrt{n^2+10n-7}}{2(n-1)},$ which is the positive root of the quadratic equation:
\begin{equation}\label{qua}
2+(n+1)p-(n-1)p^2=0,
\end{equation}
in the sense that: there exits global solution for small data if $p>p_S(n)$, and the solution would blow up in finite time even for the small data if $1<p\leqslant p_S(n)$. The Strauss conjecture was proved by F. John \cite{John} in the case $n=3$ and R.T. Glassey \cite{G1,G2} in the case $n=2$. When $n\geqslant 4$, T.C. Sideris \cite{Sideris} showed the blow-up of the solution in the case $1<p<p_S(n)$ and V. Georgiev, H. Lindblad, C.D. Sogge \cite{GLS} obtained the global solution for the small data in the case $p>p_S(n)$. In the critical case $p=p_S(n)$, J. Schaeffer \cite{Schaeffer} showed that small data solution blow up in finite time in dimension $n=2,3$, and the same result was proved by B. Yordanov and Q.S. Zhang \cite{Yor} for dimension $n \geqslant 4$, which completed the proof of Strauss conjecture. Recently, K. Wakasa and B. Yordanov \cite{WY1801} revisit this problem but with metric perturbations of the Laplacian. They have used the John's iteration method in \cite{John} with the "slicing method" from R. Agemi, Y. Kurokawa and H. Takamura \cite{AKT00}.
Moreover, these methods are also applied to the damping wave equation with scattering case in \cite{WY1802}.

On the other hand, the Tricomi equation has also been studied extensively. For the linear equation, J. Barros-Neto and I.M. Gelfand in \cite{Bar} and K. Yagdjian \cite{Yag04} computed the fundamental solution explicitly. In \cite{Yag06}, K. Yagdjian investigated the issue of global existence of this semilinear problem. Some necessity conditions of nonlinearity exponent were derived for both blow-up and global existence. However, the critical exponent was not determined. Later, Z. Ruan, I. Witt and H. Yin \cite{Rua1,Rua2,Rua3,Rua4} gave a systematically studied of the semilinear problem. The local existence result and minimal regularity solution are established. With expectation that the semilinear generalized Tricomi equation shall share similar properties as semilinear wave equation,  D. He, I. Witt and H. Yin have found the critical exponent $p_{crit}(m,n)$ as the the positive root of following quadratic equation
\begin{equation}
\gamma(m,n,p):=-((m+2)\f n2-1)p^2-((m+2)(1-\f n2)-3)p+(m+2)=0.
\end{equation} They have shown that the solution to problem \eqref{main} would blow up in finite time in the sub-critical case $1<p<p_{crit}(m,n)$ in \cite{He-Witt-Yin17}, and the small data solution would exist globally in the super-critical case $p>p_{crit}(m,n)$ in \cite{He-Witt-Yin II}.
The results of low dimension case $n=1,\ 2$ were given in \cite{He-Witt-Yin1D,He-Witt-Yin2D}. In particular, they also showed the blow-up result for the critical case for $m=1$ in \cite{He-Witt-Yin2D}, by applying the test function method and the Riccati-type ordinary differential inequality. In this sense, they have determined the critical exponent of semilinear Tricomi equation. However, the blow-up result of critical part is only given for $m=1$ and the lifespan estimates of subcritical and critical case are still left unknown.

In present paper, our main purpose is to consider the blow-up problem of critical case for general $m$. By applying the approach as K. Wakasa and B. Yordanov in \cite{WY1801}, we obtained the expected blow-up result. The core part is to find the fundamental solution system of \eqref{Tricomi-ODE} and give the asymptotic estimation. In fact, by variable changing, the solution can be written simply by the confluent hypergeometric function. The test function with some element estimation is then derived with the aid of these functions. Moreover, by employing the iteration argument, the lifespan upperbound estimates for both sub-critical case and critical case are established. We emphasis that this approach can be easily extend to the perturbed Laplacian.

We now introduce the definition of weak solution and the main result.
\begin{defn}
Define $u$ is an energy space solution, if
$(u,u_t)\in C([0,T_\varepsilon), H^1(\Real^n)\times L^2(\Real^n))$ and for any $\zeta\in C_0^\infty(\Real^n\times [0,T_\varepsilon))$
\begin{eqnarray}
\nonumber
& & \int u_s(x,t)\zeta(x,t) dx-\int u_s(x,0)\zeta(x,0) dx\\
\label{43}
& & -\int_0^t \int (u_s(x,s)\zeta_{s}(x,s)- u_1(x)s^m\nabla u(x,s)\cdot\nabla \zeta(x,s)) dxds \label{weak}\\
\nonumber
& & =\int_0^t \int |u(x,s)|^p \zeta(x,s) dxds,
\end{eqnarray}
for $t\in (0,T_\eps).$
\end{defn}

Our main results are stated in the following.
\begin{thm}\label{Thm:sub-critical}
For the Cauchy problem \eqref{main} with $1<p<p_{crit}(m,n)$, let the initial values $u_0,\ u_1$ be nonnegative smooth function with compact support in $\{x||x|\leqslant R\}$. Suppose that a solution $u$ of \eqref{main} satisfies:
$$supp\ u\subset\{(x,t)\in \mathbb{R}^n\times [0,T): |x|\leqslant B(0,R+\phi(t))\},$$
where $\phi(t)=\f2{m+2}t^{\f{m+2}2}$. Then the lifespan $T<\infty$ and there exists a positive constant $C$ which is independent of $\varepsilon$ such that
$$T(\varepsilon)\leqslant C\varepsilon^{-\frac{2p(p-1)}{\gamma(m,n,p)}}.$$
\end{thm}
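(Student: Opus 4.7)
The plan is to combine a test-function lower bound with a Kato--Sideris--John-type iteration, both adapted to the time-weighted operator $\partial_t^2-t^m\Delta$. First I would set $F(t):=\int_{\mathbb{R}^n}u(x,t)\,dx$ and differentiate twice: using \eqref{main} and the compact-support hypothesis to discard $\int t^m\Delta u\,dx$, one obtains $F''(t)=\int|u|^p\,dx$. H\"older's inequality on the support ball $|x|\le R+\phi(t)$ then produces the key pointwise bound
\begin{equation*}
F''(t)\;\ge\;C_0\,(R+\phi(t))^{-n(p-1)}\,|F(t)|^p,
\end{equation*}
while the nonnegativity and nontriviality of $(u_0,u_1)$ give $F(0),\,F'(0)\ge c\varepsilon$.

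To seed the iteration with a first polynomial lower bound, I would next construct a separable positive solution $\psi(x,t)=\lambda(t)\varphi_0(x)$ of $\psi_{tt}-t^m\Delta\psi=0$, where $\varphi_0(x)=\int_{S^{n-1}}e^{x\cdot\omega}\,d\omega$ is the Yordanov--Zhang eigenfunction ($\Delta\varphi_0=\varphi_0$, with $\varphi_0(x)\sim|x|^{-(n-1)/2}e^{|x|}$) and $\lambda(t)$ is the subdominant positive solution of $\lambda''=t^m\lambda$. After the change of variable $\tau=\phi(t)=\frac{2}{m+2}t^{(m+2)/2}$ this ODE reduces to a modified Bessel / confluent hypergeometric equation; picking the decaying branch produces $\lambda(t)>0$ on $[0,\infty)$ with $\lambda(t)\sim t^{-m/4}e^{-\phi(t)}$ at infinity. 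The bilinear functional $H(t):=\int(u_t\psi-u\psi_t)\,dx$ then satisfies $H'(t)=\int|u|^p\psi\,dx\ge 0$ and $H(0)\ge c\varepsilon$, which, combined with H\"older's inequality in the weight $\psi$ (whose integral over the support ball is of polynomial order $t^{-(n-1)(m+2)/4}$), yields a first lower bound $F(t)\ge c_1\,\varepsilon^{p}\,(1+t)^{a_1}$ with an explicit $a_1=a_1(m,n,p)$.

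With this first bound in hand, I would then run the standard iteration: feeding $F(t)\ge C_j(1+t)^{a_j}\varepsilon^{q_j}$ into the differential inequality above and integrating twice yields a new bound with
\begin{equation*}
a_{j+1}=p\,a_j+2-\tfrac{n(m+2)}{2}(p-1),\qquad q_{j+1}=p\,q_j,
\end{equation*}
and carefully tracked constants $C_{j+1}$. The fixed point $a^{*}=\frac{n(m+2)}{2}-\frac{2}{p-1}$ gives $a_j=a^{*}+p^{j}(a_1-a^{*})$, so taking logarithms shows $\log F(T)\to\infty$ as $j\to\infty$ once $\varepsilon^{2p(p-1)}\,T^{\gamma(m,n,p)}\gtrsim 1$. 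Solving for $T$ produces the asserted
\begin{equation*}
T(\varepsilon)\;\le\;C\,\varepsilon^{-\frac{2p(p-1)}{\gamma(m,n,p)}},
\end{equation*}
where the subcriticality hypothesis $p<p_{crit}(m,n)$ is used precisely to ensure $\gamma(m,n,p)>0$, so the exponent is finite, positive, and the iteration is divergent in the right direction.

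The main obstacle will be the construction and sharp asymptotic control of the temporal test factor $\lambda(t)$. Unlike the wave case, where $\lambda(t)=e^{-t}$ is trivially available, the ODE $\lambda''=t^m\lambda$ admits no elementary positive decaying solution; one must single out the subdominant branch of the associated confluent hypergeometric equation, verify positivity on all of $[0,\infty)$, and extract the sharp asymptotics $\lambda(t)\sim t^{-m/4}e^{-\phi(t)}$. Everything downstream --- the first polynomial lower bound and the Kato-type iteration --- is then a careful but essentially mechanical adaptation of the Yordanov--Zhang / John--Sideris argument, with the finite propagation speed $1$ replaced by $\phi'(t)=t^{m/2}$.
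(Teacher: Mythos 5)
Your plan matches the paper's architecture almost step for step: the functional $G(t)=\int u\,dx$, the identity $G''(t)=\int|u|^p dx$ obtained by testing with $\zeta\equiv 1$, the H\"older bound on the support ball giving $G''\ge C(R+\phi(t))^{-n(p-1)}|G|^p$, a test-function argument supplying the polynomially growing seed, and a Kato--John-type iteration whose fixed point is $a^\ast=\tfrac{n(m+2)}{2}-\tfrac{2}{p-1}$ and whose divergence condition $a_1>a^\ast$ is exactly $\gamma(m,n,p)>0$, i.e.\ $p<p_{crit}(m,n)$. The recursion $a_{j+1}=p\,a_j+2-\tfrac{n(m+2)}{2}(p-1)$ and the resulting exponent $\varepsilon^{-2p(p-1)/\gamma(m,n,p)}$ agree (after eliminating the paper's bookkeeping split of the exponent into $(1+t)^{-a_j}(t-T_0)^{b_j}$, which tracks the same power of $t$).

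Where you diverge from the paper is in how the seed bound (the paper's Lemma~\ref{lem:lp_lower}) is obtained. You propose the separable Yordanov--Zhang-type test function $\psi(x,t)=\lambda(t)\varphi_0(x)$ with $\lambda$ the subdominant solution of $\lambda''=t^m\lambda$, and derive the $L^p$ lower bound through the conserved functional $H(t)=\int(u_t\psi-u\psi_t)\,dx$. This is precisely the route taken by He--Witt--Yin, and the paper explicitly states in the remark preceding Lemma~\ref{lem:lp_lower} that this alternative works; instead, the paper obtains the same bound from the test function $\eta_q$ built on the confluent hypergeometric fundamental system and the nonlinear integral identity \eqref{identity}, because that machinery is needed anyway for the critical-case iteration of Theorem~\ref{Thm:critical}. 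So your approach is a valid and somewhat more elementary alternative for Theorem~\ref{Thm:sub-critical}, at the modest cost of building two separate test-function constructions if both theorems are to be proved. One small correction and one caution: the decaying branch satisfies $\lambda(t)\sim Ct^{-m/4}e^{-\phi(t)}$ as you say, but the step from ``$H(t)\ge c\varepsilon$'' to ``H\"older in the weight $\psi$'' needs the intermediate reformulation $H=\lambda^2\,\tfrac{d}{dt}\bigl(\lambda^{-1}\!\int u\varphi_0\,dx\bigr)$, which after one integration produces $\int u\psi\,dx\gtrsim \varepsilon\,\lambda^2(t)\int_0^t\lambda^{-2}$; only then can H\"older be applied in $\psi$, and for $m>0$ this quantity grows only like $\varepsilon\,t^{-m/2}$ rather than being bounded below by $c\varepsilon$, so you must carry this polynomial loss through the $L^p$ bound (it exactly cancels, reproducing the exponent in \eqref{lp_lower}). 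Also, the ``polynomial order'' you quote for $\int_{|x|\le \phi(t)+R}\psi^{p'}\,dx$ should read $\sim t^{-mp'/4}\phi(t)^{(n-1)(1-p'/2)}$ rather than a pure negative power of $t$.
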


\begin{thm}\label{Thm:critical}
For the Cauchy problem \eqref{main} with $p=p_{crit}(m,n)$, let the initial values $u_0,\ u_1$ be nonnegative smooth function with compact support in $\{x||x|\leqslant R\}$. Suppose that a solution $u$ of \eqref{main} satisfies:
$$supp\ u\subset\{(x,t)\in \mathbb{R}^n\times [0,T): |x|\leqslant B(0,R+\phi(t))\},$$
where $\phi(t)=\f2{m+2}t^{\f{m+2}2}$. Then the lifespan $T<\infty$ and there exists a positive constant $C$ which is independent of $\varepsilon$ such that
$$T(\varepsilon)\leqslant \exp(C\varepsilon^{-p(p-1)}).$$
\end{thm}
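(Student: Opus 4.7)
The plan is to adapt the iteration--with--slicing technique of Wakasa--Yordanov \cite{WY1801,WY1802} to the generalized Tricomi setting. Two positive functionals will be central: $F(t) := \int_{\Real^n} u(x,t)\,\psi(x,t)\,dx$ and $G(t) := \int_{\Real^n} |u(x,t)|^p\,dx$, where $\psi$ is a positive solution of the linear adjoint Tricomi equation $\psi_{tt} - t^m \Delta\psi = 0$. I would build $\psi$ by separation of variables as $\psi(x,t) = \lambda(t)\varphi(x)$ with $\varphi(x) = \int_{S^{n-1}} e^{x\cdot\omega}\,d\omega$ (so $\Delta\varphi = \varphi$), which reduces the construction to the second--order ODE $\lambda''(t) = t^m \lambda(t)$. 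Under the change of variable $\tau = \phi(t) = \frac{2}{m+2}t^{(m+2)/2}$ this transforms into an equation whose solutions are expressible via confluent hypergeometric functions, and a WKB--type analysis picks out the decaying branch with asymptotic behaviour $\lambda(t) \sim C\, t^{-m/4}\,e^{-\phi(t)}$ as $t\to\infty$. The fractional prefactor $t^{-m/4}$ encodes the Tricomi scaling and will dictate the exact exponents that emerge in the iteration.

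Next I would establish two baseline estimates. Multiplying \eqref{main} by $\psi$ (or by a slightly modified $\eta$ obtained by truncating $\psi$ at a sliced cone), integrating over $\{|x| \leq R + \phi(s)\}$, and using the support hypothesis together with the positivity of $(u_0,u_1)$, yields an integral inequality of the schematic form
$$F(t) \geq C_0 \varepsilon + C_1 \int_{t_0}^{t}\!\!\int_{t_0}^{s} G(r)\,k(r,s)\,dr\,ds$$
for an explicit weight $k$. H\"older's inequality in the form $F(t)^p \leq G(t) \cdot \bigl(\int_{|x|\leq R+\phi(t)} \psi^{p/(p-1)}\,dx\bigr)^{p-1}$, combined with the asymptotics of $\psi$ and the growth of the Tricomi ball's volume, gives $G(t) \gtrsim F(t)^p (1+t)^{-\alpha_0}$, where $\alpha_0$ is precisely the value at which the quadratic form $\gamma(m,n,p)$ vanishes when $p = p_{\mathrm{crit}}(m,n)$. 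At this critical value a naive iteration produces no polynomial gain, so instead one has to look for logarithmic gains.

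The third ingredient is the slicing iteration. Picking a nested sequence $t_k = t_0 \prod_{j=1}^{k}(1+2^{-j})$, which converges to $2t_0$, I would prove by induction on $k$ a lower bound of the form
$$F(t) \geq D_k (1+t)^{-\alpha}\bigl(\log(t/t_{k-1})\bigr)^{b_k}, \qquad t \geq t_k,$$
with $\alpha$ fixed by the critical scaling and $b_k$ satisfying a recursion $b_{k+1} = p\,b_k + 1$, so that $b_k$ grows like $p^k$; the constants $D_k$ obey a double--exponential recursion whose logarithm remains summable thanks to the slicing parameters $1+2^{-j}$. Substituting back into the basic inequality, taking logarithms, and sending $k \to \infty$, the accumulated bound forces $\log T(\varepsilon) \leq C\,\varepsilon^{-p(p-1)}$, which is the advertised lifespan estimate.

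The main obstacle will be the opening step. Because the Tricomi cone opens like $t^{(m+2)/2}$ rather than linearly, the precise asymptotics of $\lambda(t)$ and of the weighted volume $\int_{|x|\leq R+\phi(t)}\psi^{p/(p-1)}\,dx$ must be computed sharply --- they carry fractional powers of $t$ that have to align exactly with the quadratic form $\gamma(m,n,p)$ in order for the iteration to land on the logarithmic borderline rather than shift it. Once these asymptotic constants are correctly pinned down, the slicing machinery proceeds essentially as in \cite{WY1801}, but any imprecision in the weights would either miss the critical line $\gamma(m,n,p)=0$ or produce the wrong power of $\varepsilon$ in the final bound.
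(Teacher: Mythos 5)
Your overall blueprint matches the paper's (Wakasa--Yordanov slicing iteration, confluent hypergeometric analysis of the Tricomi ODE, a positive weighted functional, a critical-exponent identity closing the powers), but there is a genuine gap in the construction of the test function, and it propagates to the logarithmic structure that the iteration actually runs on.

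You propose a \emph{single-mode} weight $\psi(x,t)=\lambda(t)\varphi(x)$ with $\lambda''=t^m\lambda$ and $\lambda(t)\sim t^{-m/4}e^{-\phi(t)}$. Near the light cone $|x|\approx\phi(t)$, this $\psi$ is essentially flat in the transverse variable: $\psi\sim t^{-m/4}\phi(t)^{-(n-1)/2}$ with no decay in $\langle\phi(t)-|x|\rangle$. Consequently $\int_{|x|\le\phi(t)+R}\psi^{p/(p-1)}\,dx$ is a pure power of $\phi(t)$, exactly as you write: ``$G(t)\gtrsim F(t)^p(1+t)^{-\alpha_0}$''. The paper instead uses the spectrally averaged weight $\eta_q(x,t,s)=\int_0^{\lambda_0}e^{-\lambda(\phi(t)+R)}\frac{\Phi_2(t,s;\lambda)}{t-s}\varphi_\lambda(x)\lambda^{q}\,d\lambda$, whose key feature (Lemma~\ref{lem2}(iii)) is the near-cone decay $\eta_q(x,t,t)\lesssim\langle\phi(t)\rangle^{-(n-1)/2}\langle\phi(t)-|x|\rangle^{(n-3)/2-q}$. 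With the critical choice $q=(n-1)/2-1/p$, the H\"older integral becomes $\int_{|x|\le\phi(s)+R}\langle\phi(s)-|x|\rangle^{-1}dx\lesssim\langle\phi(s)\rangle^{n-1}\log\langle\phi(s)\rangle$, and it is precisely this logarithm that enters Proposition~\ref{prop:frame} as the factor $(\log\langle s\rangle)^{p-1}$ in the denominator of the iteration frame. Your single-mode $\psi$ cannot produce this log; the logarithm does not appear from ``the growth of the Tricomi ball's volume'' or from slicing alone.

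This matters for the final exponent. The paper's iteration hypothesis (Proposition~\ref{prop:j-step}) carries \emph{two} log factors, $\langle t\rangle^{m/4}F(t)\ge C_j(\log\langle t\rangle)^{-b_j}\{\log(t/l_j)\}^{a_j}$, with $a_j=(p^{j+1}-1)/(p-1)$ driven by the Duhamel kernel and $b_j=p^j-1$ driven by the $(\log\langle s\rangle)^{p-1}$ in the frame. Your single-log ansatz $F(t)\ge D_k(1+t)^{-\alpha}(\log(t/t_{k-1}))^{b_k}$ with $b_{k+1}=pb_k+1$ corresponds only to the paper's $a_j$; the missing $(\log\langle t\rangle)^{-b_j}$ term changes the balance $a_j-b_j\sim p^j/(p-1)$ against $\log C_j\sim p^{j+1}\log\varepsilon$ and therefore changes the lifespan exponent. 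With your setup the accounting lands on $\exp(C\varepsilon^{-(p-1)})$, not the stated $\exp(C\varepsilon^{-p(p-1)})$; the discrepancy is a symptom that the log in the nonlinear inequality was not tracked. Beyond that, you also need the algebraic closure at criticality (the identity $\frac{m}{4}p+\bigl[1-\frac1p+\frac{n-1}{2}(p-1)-\frac{m+4}{2(m+2)}\bigr]\frac{m+2}{2}=1$ when $\gamma(m,n,p)=0$), and an initiation estimate of the form $\langle t\rangle^{m/4}F(t)\ge M\varepsilon^p\log(2t/3)$ rather than the raw $F(t)\ge C_0\varepsilon$; both are present in the paper (via Lemma~\ref{lem:lp_lower} and Lemma~\ref{lem:initiate}) and both require the $\eta_q$ machinery, not the single mode. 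In short: replace the separation-of-variables weight with the $\lambda$-integrated family $\eta_q$, verify the near-cone estimate, extract the logarithm in Proposition~\ref{prop:frame}, and add the second log factor to your iteration ansatz.
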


We arrange our proof as follows. In Section 2, we study the fundamental solutions of generalized Tricomi equation related ODE \eqref{Tricomi-ODE} to give some primaries proof elements including the test function and its asymptotic behavior. Section 3 concerns on the logarithmic type integral inequality which plays a key role in proceeding the iteration argument for critical case. Additionally, the lower bound of $L^p$ norm of solution is derived from this logarithmic type integral inequality. In Section 4, we focus on the proof of main theorems. The iteration argument are employed for both sub-critical case and critical case.

\section{Test functions}
 We first consider the "prototype" of the generalized Tricomi equation,
\begin{equation}\label{Tricomi-ODE}
y''-\lambda^2t^my=0.
\end{equation}
Proceeding as Section 3.1 in \cite{Yag06}, we may obtain its fundamental solution through the confluent hypergeometric function. To be precisely,
let $z=-\f{4\lambda}{m+2}t^{\f{m+2}2}$ and we introduce the unknown function $W(z)$ such that
\begin{equation}\label{W}y(t)=W(z)\exp(-\f{z}2).\end{equation}
Plugging the derivatives
\begin{eqnarray*}y'(t)&=&[W'-\f12W]\f{dz}{dt}\exp(-\f z2)\\
y''(t)&=&[W''-\f12W'](\f{dz}{dt})^2\exp(-\f z2)\\
&+&[W'-\f12W](\f{d^2z}{dt^2})\exp(-\f z2)\\
&+&[W'-\f12W](\f{dz}{dt})^2(-\f12)\exp(-\f z2),
\end{eqnarray*}
into ODE \eqref{Tricomi-ODE}, we find that $W(z)$ satisfies the confluent hypergeometric equation (or Kummer's equation)
$$zW''(z)+(\f{m}{m+2}-z)W'-\f{m}{2(m+2)}W=0.$$
There exits two linearly independent solution $M(\alpha,\gamma;z)$ and $z^{1-\gamma}M(1+\alpha-\gamma,2-\gamma;z)$ for this equation, with
$$\alpha=\f{m}{2(m+2)},\ \ \gamma=\f{m}{m+2}.$$
The function $M(\alpha,\gamma;z)$ is called confluent hypergeometric function, which is entire in $z$. We collect some properties for our later use. Interesting reader may refer \cite{Erdelyi} for more details.
\begin{itemize}
  \item Limiting form as $z\rightarrow0$: $$M(a,b;z)=1+O(z).$$
  \item Limiting form as $z\rightarrow\infty$:
  \begin{equation}\label{M-asy1}M(a,b;z)\sim \f{\Gamma(b)}{\Gamma(a)}e^zz^{a-b},\ \ \ Re\ z>0. \end{equation}
  \begin{equation}\label{M-asy2}M(a,b;z)\sim \f{\Gamma(b)}{\Gamma(b-a)}(-z)^{-a},\ \ \ Re\ z<0. \end{equation}
  \item Kummer's transformation:
  $$M(a,b;z) = e^zM(b-a,b;-z).$$
  \item Derivative and Wronskian:
  $$\f{d}{dz}M(a,b;z)=\f abM(a+1,b+1;z).$$
  \begin{eqnarray*}
  \mathcal{W}(M(a,b;z),z^{1-b}M(1+a-b,2-b;z))  
  =(1-b)z^{-b}e^z.
  \end{eqnarray*}
  \item Relations to elementary functions:
  \begin{equation}\label{ele}M(0,0;z)=e^z,\ M(1,2;z)=\f{2e^{\f z2}}z\sinh\f z2.\end{equation}
\end{itemize}

Now, we return to the ODE \eqref{Tricomi-ODE}. With the help of the independent solutions of Kummer's equation, the following lemma gives its fundamental solution system through \eqref{W}.
\begin{lem}Let $z=-2\lambda\phi(t)$, $\phi(t)=\f{2}{m+2}t^{\f{m+2}2}$, $\alpha=\f{m}{2(m+2)}$, $\gamma=\f{m}{m+2}$, then the functions
$$V_1(t;\lambda)=e^{-\f z2}M(\alpha,\gamma;z),\ \ \ V_2(t;\lambda)=e^{-\f z2}c_mz^{1-\gamma}M(1+\alpha-\gamma,2-\gamma;z)$$
form the fundamental system for the equation \eqref{Tricomi-ODE} such that
$$V_1(0;\lambda)=1,\ V_1'(0;\lambda)=0;\ \ V_2(0;\lambda)=0,\ V_2'(0;\lambda)=1.$$
$c_m=(-\f{4\lambda}{m+2})^{-\f2{m+2}}$ is a scaling coefficient such that $V_2'(0;\lambda)=1$.
Moreover, for $t\geqslant s\geqslant0$, denote the special solutions $\Phi_1(t,s;\lambda)$ and $\Phi_2(t,s;\lambda)$ by the determinant
\begin{equation}
\Phi_1(t,s;\lambda)=\left|\begin{array}{cc}V_1(t;\lambda)&V_2(t;\lambda)\\\partial_tV_1(s;\lambda)&\partial_tV_2(s;\lambda)\end{array}\right|
\end{equation}
\begin{equation}
\Phi_2(t,s;\lambda)=\left|\begin{array}{cc}V_1(s;\lambda)&V_2(s;\lambda)\\V_1(t;\lambda)&V_2(t;\lambda)\end{array}\right|
\end{equation} we then have:
$$\Phi_1(s,s;\lambda)=1,\ \partial_t\Phi_1(s,s;\lambda)=0;\ \ \Phi_2(s,s;\lambda)=0,\ \partial_t\Phi_2(s,s;\lambda)=1.$$
\end{lem}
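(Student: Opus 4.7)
The plan is to exploit the substitution $y(t)=W(z)e^{-z/2}$ with $z=-\frac{4\lambda}{m+2}t^{(m+2)/2}$ that was already carried out in the paragraph above the lemma. That computation reduces \eqref{Tricomi-ODE} to Kummer's equation in $z$ with parameters $\alpha=\frac{m}{2(m+2)}$ and $\gamma=\frac{m}{m+2}$. Since $M(\alpha,\gamma;z)$ and $z^{1-\gamma}M(1+\alpha-\gamma,2-\gamma;z)$ are two linearly independent solutions of Kummer's equation, the functions $V_1$ and $V_2$ obtained from these via $y=W e^{-z/2}$ are solutions of \eqref{Tricomi-ODE}. It then remains to (i) verify the four initial conditions and (ii) deduce the four conditions on $\Phi_1,\Phi_2$ at $t=s$.

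For (i), I would simply evaluate at $t=0$, where $z=0$ and $M(a,b;0)=1$. Thus $V_1(0;\lambda)=e^0\cdot 1=1$. Differentiating, $V_1'(t)=e^{-z/2}\bigl[M'(z)-\tfrac12 M(z)\bigr]z'(t)$, and since $z'(t)=-2\lambda t^{m/2}$ vanishes at $t=0$ for $m>0$, we get $V_1'(0;\lambda)=0$. For $V_2$, I would first simplify $c_m z^{1-\gamma}$: using $1-\gamma=\frac{2}{m+2}$, one has
\begin{equation*}
c_m z^{1-\gamma}=\left(-\tfrac{4\lambda}{m+2}\right)^{-\tfrac{2}{m+2}}\!\!\left(-\tfrac{4\lambda}{m+2}\right)^{\tfrac{2}{m+2}}t=t,
\end{equation*}
so $V_2(t;\lambda)=t\,e^{-z/2}M(1+\alpha-\gamma,2-\gamma;z)$. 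Then $V_2(0;\lambda)=0$ is immediate, and differentiating once: the only term that survives at $t=0$ is the derivative of the outer factor $t$, giving $V_2'(0;\lambda)=1\cdot 1\cdot 1 =1$ (all other terms carry a factor of $z'(t)\to 0$).

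For (ii), I would invoke Abel's identity. The ODE \eqref{Tricomi-ODE} has no first-order term, so the Wronskian $\mathcal{W}(V_1,V_2)(t)=V_1(t)V_2'(t)-V_2(t)V_1'(t)$ is constant in $t$. Evaluating at $t=0$ using the initial conditions just established gives $\mathcal{W}(V_1,V_2)\equiv 1$, which in particular confirms linear independence and hence the fundamental-system claim. Expanding the two determinants,
\begin{equation*}
\Phi_1(t,s;\lambda)=V_1(t)\partial_tV_2(s)-V_2(t)\partial_tV_1(s),\qquad
\Phi_2(t,s;\lambda)=V_1(s)V_2(t)-V_2(s)V_1(t),
\end{equation*}
one reads off $\Phi_1(s,s;\lambda)=\mathcal{W}(V_1,V_2)(s)=1$, $\partial_t\Phi_1(s,s;\lambda)=V_1'(s)V_2'(s)-V_2'(s)V_1'(s)=0$, $\Phi_2(s,s;\lambda)=0$ and $\partial_t\Phi_2(s,s;\lambda)=\mathcal{W}(V_1,V_2)(s)=1$.

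The one subtle point I expect to be the main obstacle is the bookkeeping of the fractional power $z^{1-\gamma}=z^{2/(m+2)}$ when $\lambda>0$ makes $z$ negative: the product $c_m z^{1-\gamma}$ must be interpreted with compatible branches so that the collapse to $t$ is real-valued. Choosing the principal branch consistently (equivalently, writing $z^{1-\gamma}=\bigl(-\tfrac{4\lambda}{m+2}\bigr)^{1-\gamma}t$ after extracting the $t$-dependence before raising to the power) resolves this and justifies that $V_2$ is a genuine real solution on $t\geq 0$. The rest is routine use of the confluent hypergeometric identities quoted in the bullet list above.
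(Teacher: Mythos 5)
Your argument is correct, and the place where you diverge from the paper is precisely where the paper spends its effort: the evaluation of the Wronskian $\mathcal{W}(V_1,V_2)$, which is what makes $\Phi_1(s,s;\lambda)=\partial_t\Phi_2(s,s;\lambda)=1$. The paper treats the initial conditions for $V_1,V_2$ at $t=0$ as "easy to verify" and omits them, and instead computes $\mathcal{W}(V_1,V_2)(s)$ directly at a general $s$ by unwinding the derivatives in $t$ back to derivatives in $z$ and plugging in the explicit confluent-hypergeometric Wronskian identity $\mathcal{W}\bigl(M(a,b;z),z^{1-b}M(1+a-b,2-b;z)\bigr)=(1-b)z^{-b}e^z$; the normalizing constant $c_m$ and the factor $\tfrac{dz}{dt}=-2\lambda t^{m/2}$ then conspire to give exactly $1$. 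You instead prove the initial conditions at $t=0$ carefully (using $M(a,b;0)=1$ and $z'(0)=0$ for $m>0$, and the collapse $c_mz^{1-\gamma}=t$), and then get the constancy of the Wronskian for free from Abel's identity because \eqref{Tricomi-ODE} has no first-order term; evaluating at $t=0$ gives $\mathcal{W}\equiv 1$. Both routes are valid and yield identical conclusions. Your version is more elementary and avoids any reliance on the explicit Kummer Wronskian formula, at the cost of having to check that $V_1,V_2$ are indeed $C^1$ at $t=0$ so Abel's identity extends to the closed half-line (which you do). The paper's version has the minor pedagogical advantage of simultaneously confirming that the scaling constant $c_m$ is the right one. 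Your closing remark on branches is a reasonable way to justify $c_mz^{1-\gamma}=t$; the paper simply records this identity in a remark without discussion.
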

\begin{proof}It is easy to verify $V_1(t;\lambda)$ and $V_2(t;\lambda)$ are the required fundamental solution. We only give the verification for $\Phi_1(t,s;\lambda)$ and $\Phi_2(t,s;\lambda)$. First, it is obvious that
$$\partial_t\Phi_1(s,s;\lambda)=\Phi_2(s,s;\lambda)=0.$$
Second, by using the Wronskian of $M(\alpha,\gamma;z)$ and $z^{1-\gamma}M(1+\alpha-\gamma,2-\gamma;z)$, we have
\begin{eqnarray*}
&&\Phi_1(s,s;\lambda)=\partial_t\Phi_2(s,s;\lambda)=\left|\begin{array}{cc}V_1(s;\lambda)&V_2(s;\lambda)\\ \partial_t V_1(s;\lambda)&\partial_tV_2(s;\lambda)\end{array}\right|\\
&=&\left|\begin{array}{cc}e^{-\f z2}M(\alpha,\gamma;z)&e^{-\f z2}c_mz^{1-\gamma}M(1+\alpha-\gamma,2-\gamma;z)\\\partial_t\left[e^{-\f z2}M(\alpha,\gamma;z)\right]&\partial_t\left[e^{-\f z2}c_mz^{1-\gamma}M(1+\alpha-\gamma,2-\gamma;z)\right]\end{array}\right|\\
&=&\left|\begin{array}{cc}e^{-\f z2}M(\alpha,\gamma;z)&e^{-\f z2}c_mz^{1-\gamma}M(1+\alpha-\gamma,2-\gamma;z)\\e^{- \f z2}\partial_t M(\alpha,\gamma;z)&e^{-\f z2}c_m\partial_t\left(z^{1-\gamma}M(1+\alpha-\gamma,2-\gamma;z)\right)\end{array}\right|\\
&=&e^{-z}c_m\f{dz}{dt}\left|\begin{array}{cc}M(\alpha,\gamma;z)&z^{1-\gamma}M(1+\alpha-\gamma,2-\gamma;z)\\\partial_z M(\alpha,\gamma;z)&\partial_z\left(z^{1-\gamma}M(1+\alpha-\gamma,2-\gamma;z)\right)\end{array}\right|\\
&=&c_me^{-z}(-2\lambda t^{\f{m}2})\times\mathcal{W}(M(\alpha,\gamma;z),z^{1-\gamma}M(1+\alpha-\gamma,2-\gamma;z))\\
&=&c_m e^{-z}(-2\lambda t^{\f{m}2})\times (1-\gamma)z^{-\gamma}e^z=1.
\end{eqnarray*}\end{proof}
\begin{rem}
It is clearly, $c_mz^{1-\gamma}=t$ and
$$V_1(t;\lambda)=\Phi_1(t,0;\lambda),\ \ V_2(t;\lambda)=\Phi_2(t,0;\lambda).$$
\end{rem}
Next, we define following two test function. For $\lambda_0\in (0,\beta/2]$ and $q>-1$, let
\begin{eqnarray}
\label{aq}
\xi_q (x,t,s) & = & \int_{0}^{\lambda_0}e^{-\lambda(\phi(t)+R)}\Phi_1(t,s;\lambda) \: \varphi_\lambda(x)\lambda^{q} d\lambda,\\
\eta_q (x,t,s) & = & \int_{0}^{\lambda_0}e^{-\lambda(\phi(t)+R)}\frac{\Phi_2(t,s;\lambda)}{t-s}\: \varphi_\lambda(x)\lambda^{q} d\lambda.
\label{bq}
\end{eqnarray}
Here $\varphi_\lambda(x)=\varphi(\lambda x)$ with
\begin{equation*}\varphi(x):=
\begin{cases}
\int_{\mathbb{S}^{n-1}}e^{x\cdot \omega}d\omega \ \ &\mbox{for}\ \ n\geqslant2,\\
e^x+e^{-x} \ \ &\mbox{for}\ \ n=1.
\end{cases}
\end{equation*}
The function $\varphi$  satisfies $$\Delta\varphi(x)=\varphi(x)$$ and the asymptotic estimate
\begin{equation}\label{asy-varphi}
\varphi(x)\sim C_n|x|^{-\frac{n-1}2}e^{|x|} \ \ \ \mbox{as}\ \ \ |x|\rightarrow\infty.
\end{equation}
Besides, by L'Hospital's rule, one may also verify that
$$\lim_{s\rightarrow t}\frac{\Phi_2(t,s;\lambda)}{t-s}=1,$$
which implies that
$$\eta_q (x,t,t) = \int_{0}^{\lambda_0}e^{-\lambda(\phi(t)+R)}\: \varphi_\lambda(x)\lambda^{q} d\lambda.$$
We collect some element estimates about $\xi_q(x,t)$ and $\eta_q(x,t,s)$ in following lemma.
\begin{lem}
\label{lem2}
Let $n\geqslant 2$. There exists $\lambda_0\in (0,\beta/2]$, such that the following hold:

(i) if $-\f{m}{2(m+2)}<q$, $|x|\leqslant R$ and $0\leqslant t$, then
\begin{eqnarray*}
\xi_q (x,t,0) & \geqslant & A_0\langle\phi(t)\rangle^{-\f m{2(m+2)}},\\
\eta_q (x,t,0) & \geqslant & B_0\langle\phi(t)\rangle^{-\f{m+4}{2(m+2)}};
\end{eqnarray*}

(ii) if $-\f{m}{2(m+2)}<q$, $|x|\leqslant \phi(s)+R$ and $0\leqslant s<t$, then
\begin{eqnarray*}
\eta_q (x,t,s) & \geqslant & B_1 \langle t\rangle^{-1-\f{m}4}\langle \phi(s)\rangle^{-q-1+\f{m+4}{2(m+2)}};
\end{eqnarray*}

(iii) if $(n-3)/2<q$, $|x|\leqslant \phi(t)+R$ and $0<t$, then
\begin{eqnarray*}
\eta_q (x,t,t) & \leqslant & B_2 \langle \phi(t)\rangle^{-(n-1)/2}\langle \phi(t)-|x| \rangle^{(n-3)/2-q}.
\end{eqnarray*}
Here $A_0$ and $B_k$, $k=0,1,2,$ are positive constants depending only on $\beta$, $q$ and $R$,
while $\langle s\rangle =3+|s|$ is used to simplify estimates in Sections 4 and 5.
\end{lem}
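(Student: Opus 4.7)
The estimates all flow from uniform asymptotic analysis of the confluent hypergeometric functions that define $V_1,V_2$. The first step would be to establish two-sided pointwise bounds
\[
V_1(t;\lambda)\asymp e^{\lambda\phi(t)}\langle\lambda\phi(t)\rangle^{-\alpha},\qquad V_2(t;\lambda)\asymp t\,e^{\lambda\phi(t)}\langle\lambda\phi(t)\rangle^{-(1+\alpha-\gamma)},
\]
valid uniformly for $t\geq 0$ and $\lambda\in(0,\lambda_0]$, by interpolating between the small-argument identity $M(a,b;0)=1$ and the large-argument asymptotic \eqref{M-asy2} applied along $z=-2\lambda\phi(t)\to-\infty$, together with positivity of $M$ on the negative real axis. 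Note that $\alpha=m/(2(m+2))$ and $1+\alpha-\gamma=(m+4)/(2(m+2))$ are exactly the decay rates appearing in the lemma.

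For part (i), setting $s=0$ gives $\Phi_1(t,0;\lambda)=V_1(t;\lambda)$ and $\Phi_2(t,0;\lambda)=V_2(t;\lambda)$. The constraint $|x|\leq R$ confines $\lambda x$ to the compact set $\{|y|\leq\lambda_0 R\}$ on which the continuous positive function $\varphi$ is bounded below by some $c_0>0$. Telescoping the exponentials via $e^{-\lambda(\phi(t)+R)}\cdot e^{\lambda\phi(t)}=e^{-\lambda R}$ yields
\[
\xi_q(x,t,0)\geq C\int_0^{\lambda_0}e^{-\lambda R}(1+\lambda\phi(t))^{-\alpha}\lambda^q\,d\lambda,
\]
which is bounded below by $c\langle\phi(t)\rangle^{-\alpha}$ after the change of variables $\mu=\lambda\phi(t)$; convergence at $\mu=0$ is guaranteed by $q>-\alpha$. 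The parallel argument applied to $V_2/t$ gives the $\langle\phi(t)\rangle^{-(m+4)/(2(m+2))}$ bound for $\eta_q(x,t,0)$.

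Part (ii) is the principal technical hurdle. Writing $\Phi_2(t,s;\lambda)=V_1(s)V_2(t)-V_2(s)V_1(t)$, each summand individually has size $e^{\lambda(\phi(t)+\phi(s))}$, while the true growth rate is $e^{\lambda(\phi(t)-\phi(s))}$, as evidenced by the wave-equation limit $\Phi_2=\sinh(\lambda(t-s))/\lambda$ at $m=0$. I would expose the required cancellation by applying Kummer's transformation to decompose each $V_i$ into the sum of an $e^{\lambda\phi}$-mode and an $e^{-\lambda\phi}$-mode, whose hypergeometric factors have positive argument and leading behavior given by \eqref{M-asy1}; the growing-growing cross products then combine into a single term of order $e^{\lambda(\phi(t)-\phi(s))}$ times explicit algebraic factors in $\lambda\phi(t)$ and $\lambda\phi(s)$. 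Using the support hypothesis $|x|\leq\phi(s)+R$ together with the lower bound $\varphi(\lambda x)\geq c\langle\lambda|x|\rangle^{-(n-1)/2}e^{\lambda|x|}$, all exponentials collapse into $e^{-\lambda(\phi(s)+R-|x|)}\leq 1$, and a Laplace-type evaluation of the remaining $\lambda$-integral yields the claimed rate $\langle t\rangle^{-1-m/4}\langle\phi(s)\rangle^{-q-1+(m+4)/(2(m+2))}$.

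For part (iii), the initial condition $\partial_t\Phi_2(t,t;\lambda)=1$ combined with L'Hopital gives $\Phi_2(t,s;\lambda)/(t-s)\to 1$ as $s\to t$, so the integrand reduces to $e^{-\lambda(\phi(t)+R)}\varphi(\lambda x)\lambda^q$. Using the global upper bound $\varphi(\lambda x)\leq C\langle\lambda|x|\rangle^{-(n-1)/2}e^{\lambda|x|}$ and splitting the $\lambda$-interval at $\lambda\sim 1/|x|$ to handle the transition between the $\varphi\asymp 1$ and $\varphi\asymp(\lambda|x|)^{-(n-1)/2}e^{\lambda|x|}$ regimes, a Laplace-type analysis yields the factorized bound $\langle\phi(t)\rangle^{-(n-1)/2}\langle\phi(t)-|x|\rangle^{(n-3)/2-q}$ (the two regimes $|x|$ small versus $|x|$ comparable to $\phi(t)$ are each matched by one of the two factors). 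The hypothesis $q>(n-3)/2$ is precisely what makes the large-$\lambda$ piece integrable.
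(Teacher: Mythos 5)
Your overall strategy coincides with the paper's: telescope the exponentials against the $e^{-z/2}$ prefactors of $V_1,V_2$, invoke the asymptotics of the confluent hypergeometric function $M$ at large negative argument, apply Kummer's transformation in part (ii) to make the net exponential $e^{\lambda(\phi(t)-\phi(s))}$ manifest, and finish with a Laplace-type evaluation of the $\lambda$-integral. Parts (i) and (iii) are in order; your change-of-variables $\mu=\lambda\phi(t)$ in (i) and the $\lambda$-split at $\lambda\sim 1/|x|$ in (iii) are mild variants of what the paper does (the paper instead restricts $\lambda$ to a fixed subinterval in (i), and splits on whether $|x|\lessgtr(\phi(t)+R)/2$ in (iii)), and both variants are fine.

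There is, however, a genuine gap in your treatment of part (ii). After Kummer's transformation (applied to the $s$-arguments only) one has
\[
\Phi_2(t,s;\lambda)=e^{\lambda(\phi(t)-\phi(s))}\Bigl[M(\gamma-\alpha,\gamma;-z(s))\,t\,M(1+\alpha-\gamma,2-\gamma;z(t))-s\,M(1-\alpha,2-\gamma;-z(s))\,M(\alpha,\gamma;z(t))\Bigr],
\]
and the lower bound for $|\eta_q|$ requires a lower bound for the bracket. The delicate point is that on the relevant $\lambda$-window (the paper restricts to $\lambda\in[\lambda_0/\langle\phi(s)\rangle,\,2\lambda_0/\langle\phi(s)\rangle]$, which is what keeps $-z(s)=2\lambda\phi(s)$ bounded and the $s$-dependent $M$'s uniformly controlled) the two terms in the bracket are of the \emph{same} algebraic order: using $(-z(t))^{\gamma-1}\sim\lambda^{-2/(m+2)}t^{-1}$ and $\lambda\sim\lambda_0/\langle\phi(s)\rangle$, one finds $t(-z(t))^{-1-\alpha+\gamma}\sim s(-z(t))^{-\alpha}$. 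Thus the bracket could in principle vanish, and your ``Laplace-type evaluation'' step, as written, would produce nothing. The paper resolves this by choosing $\lambda_0$ small enough that $|C_{M_1}-C_{M_2}\lambda_0^{2/(m+2)}|\geq\tfrac12 C_{M_1}$, so the difference is comparable to its leading term. Your proposal needs an argument of this kind. Also, a minor conceptual slip: Kummer's transformation $M(a,b;z)=e^z M(b-a,b;-z)$ is an identity, not a decomposition of $V_i$ into an $e^{\lambda\phi}$-mode plus an $e^{-\lambda\phi}$-mode; your picture of ``growing--growing cross products combining into a single term'' does not describe what actually happens, although the end conclusion --- that the rewritten $\Phi_2$ carries the factor $e^{\lambda(\phi(t)-\phi(s))}$ --- is correct.
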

\begin{proof} $(i)$ Since $z=-2\lambda\phi(t)<0$, we shall apply \eqref{M-asy2} for $M(\alpha,\gamma;z)$ when $|z|$ is large. For small $|z|$, we only need to require the integral is convergent around $0$, i.e, $q-\alpha+1>0$ and $q+\alpha>0$. By \eqref{aq},
\begin{eqnarray*}
\xi_q(x,t,0) & \geqslant & \inf_{0\leqslant \lambda\leqslant \lambda_0}\inf_{|x|\leqslant R}\vp_\lambda(x)  \int_{0}^{\lambda_0}e^{-\lambda R}M(\alpha,\gamma;z)\lambda^qd\lambda\\
 & \geqslant & \inf_{0\leqslant \lambda\leqslant \lambda_0}\inf_{|x|\leqslant R}\vp_\lambda(x) \int_{\f {\lambda_0}2}^{\lambda_0}e^{-\lambda R}\f{\Gamma(\gamma)}{\Gamma(\gamma-\alpha)}(-z)^{-\alpha}\lambda^qd\lambda\\
 &\geqslant& A_0 \langle\phi(t)\rangle^{-\alpha}
\end{eqnarray*}
Similarly, by (\ref{bq}) we have,
\begin{eqnarray*}
\eta_q (x,t,0) & \geqslant & \inf_{0\leqslant \lambda\leqslant \lambda_0}\inf_{|x|\leqslant R}
\vp_\lambda(x)\int_{0}^{\lambda_0}e^{-\lambda R}
M(1+\alpha-\gamma,2-\gamma;z)\lambda^{q}\: d\lambda\\
&\geqslant& \inf_{0\leqslant \lambda\leqslant \lambda_0}\inf_{|x|\leqslant R}\vp_\lambda(x) \int_{\f {\lambda_0}2}^{\lambda_0}e^{-\lambda R}\f{\Gamma(2-\gamma)}{\Gamma(1-\alpha)}(-z)^{-1-\alpha+\gamma}\lambda^qd\lambda\\
& \geqslant& B_0\langle\phi(t)\rangle^{-1-\alpha+\gamma}.
\end{eqnarray*}
$(ii)$ We combine (\ref{bq}) and the positivity of $\vp_\lambda(x)$ from \eqref{asy-varphi}. Then
\begin{eqnarray*}
|\eta_q (x,t,s) |& = & \int_{0}^{\lambda_0}e^{-\lambda(\phi(t)-\phi(s))}\f{|\Phi_2(t,s;\lambda)|}{t-s}\:
[e^{-\lambda(\phi(s)+R)}\vp_\lambda(x)]\lambda^q d\lambda\\
&\geqslant&A_1\int_{\lambda_0/\langle\phi(s)\rangle}^{2\lambda_0/\langle\phi(s)\rangle}e^{-\lambda(\phi(t)-\phi(s))}\f{|\Phi_2(t,s;\lambda)|}{t-s}\:\lambda^q d\lambda
\end{eqnarray*}
with $$A_1=\inf_{\lambda_0/\langle\phi(s)\rangle\lambda\leqslant2\lambda_0/\langle\phi(s)\rangle}\inf_{|x|<\phi(s)+R}e^{-\lambda(\phi(s)+R)}\varphi_\lambda(x)>0,$$
which is independent with $s$ and $x$.
By using the Kummer's transformation, we have
 \begin{eqnarray*}
\Phi_2(t,s;\lambda)&=&\left|\begin{array}{cc}e^{-\f{z(s)}2}M(\alpha,\gamma;z(s))&e^{-\f{z(s)}2}c_mz(s)^{1-\gamma}M(1+\alpha-\gamma,2-\gamma;z(s)) \\e^{-\f{z(t)}2}M(\alpha,\gamma;z(t))&e^{-\f{z(t)}2}c_mz(t)^{1-\gamma}M(1+\alpha-\gamma,2-\gamma;z(t)) \end{array}\right|\\
&=&\left|\begin{array}{cc}e^{\f{z(s)}2}M(\gamma-\alpha,\gamma;-z(s))& e^{\f{z(s)}{2}}c_mz(s)^{1-\gamma}M(1-\alpha,2-\gamma;-z(s))\\e^{-\f{z(t)}2}M(\alpha,\gamma;z(t))&e^{-\f{z(t)}2}c_mz(t)^{1-\gamma}M(1+\alpha-\gamma,2-\gamma;z(t))
\end{array}\right|\\
&=&e^{(z(s)-z(t))/2}\left[M(\gamma-\alpha,\gamma;-z(s))tM(1+\alpha-\gamma,2-\gamma;z(t))\right.\\
&&\left.-M(\alpha,\gamma;z(t))sM(1-\alpha,2-\gamma;-z(s))\right].
\end{eqnarray*}
Hence,
\begin{eqnarray*}
|\eta_q (x,t,s) |&\geqslant & A_1\int_{\lambda_0/\langle \phi(s)\rangle }^{2\lambda_0/\langle \phi(s)\rangle }\frac{1}{t-s}\left|M(\gamma-\alpha,\gamma;-z(s))tM(1+\alpha-\gamma,2-\gamma;z(t))\right.\\
&&\left.-M(\alpha,\gamma;z(t))sM(1-\alpha,2-\gamma;-z(s))\right|\:\lambda^{q} d\lambda,
\end{eqnarray*}
As $\lambda_0\leqslant\lambda\langle \phi(s)\rangle\leqslant2\lambda_0$, we have $-z(s)=2\lambda\phi(s)$ is bounded in $(\lambda_0,2\lambda_0)$, so
$M(\gamma-\alpha,\gamma;-z(s))$ and $M(1-\alpha,2-\gamma;-z(s))$ is bounded which is independent with $\lambda$ and $s$. On the other hand, for large $t$
$$M(1+\alpha-\gamma,2-\gamma;z(t))\sim (-z(t))^{-1-\alpha+\gamma}, $$
$$M(\alpha,\gamma;z(t))\sim (-z(t))^{-\alpha}.  $$
We may simplify the lower estimate of $|\eta_q (x,t,s)|$ as
\begin{eqnarray*}
|\eta_q (x,t,s) |
&\geqslant &\frac{{A_1}}{t-s}\int_{\lambda_0/\langle \phi(s)\rangle }^{2\lambda_0/\langle \phi(s)\rangle }\bigg|C_{M_1}t(-z(t))^{-1-\alpha+\gamma}-C_{M_2}s(-z(t))^{-\alpha}\bigg|\:\lambda^{q} d\lambda\\
&\geqslant &A_1\frac{t^{-\f m4}}{t-s}\int_{\lambda_0/\langle \phi(s)\rangle }^{2\lambda_0/\langle \phi(s)\rangle }\bigg|C_{M_1}\lambda^{-\f{m+4}{2(m+2)}}-C_{M_2}s\lambda^{-\f{m}{2(m+2)}}\bigg|\:\lambda^{q} d\lambda,
\end{eqnarray*}
where the constants $C_{M_1}$ and $C_{M_2}$ are independent with $\lambda$, $s$ and $t$.
Since we assume that $\langle s\rangle\geqslant 2$ for all $s\in\R$ and $\lambda\sim \f{\lambda_0}{\langle \phi(s)\rangle }$, we find the two terms in the absolute value brackets are actually in the same order of $\lambda$. Moreover, we can always take small $\lambda_0$, so that $|C_{M_1}-C_{M_2}\lambda_0^{\f2{m+2}}|\geqslant\frac12C_{M_1}$. Finally we obtain for $q>-\f{m}{2(m+2)}$,
\begin{eqnarray*}|\eta_q (x,t,s) |&\geqslant& A_1\frac{t^{-\f m4}}{t-s}\int_{\lambda_0/\langle \phi(s)\rangle }^{2\lambda_0/\langle \phi(s)\rangle }\bigg|C_{M_1}-C_{M_2}\lambda_0^{\f2{m+2}}\bigg|\lambda^{-\f{m+4}{2(m+2)}}\:\lambda^{q} d\lambda\\
&\geqslant& \widetilde{ A_1}\frac{t^{-\f m4}}{t-s}\int_{\lambda_0/\langle \phi(s)\rangle }^{2\lambda_0/\langle \phi(s)\rangle }\lambda^{-\f{m+4}{2(m+2)}}\:\lambda^{q} d\lambda\\
&\geqslant& \widetilde{ A_1}\frac{t^{-\f m4}}{t-s}\langle \phi(s)\rangle^{-q-1+\f{m+4}{2(m+2)}}\geqslant B_1 \langle t\rangle^{-1-\f {m}4}\langle \phi(s)\rangle^{-q-1+\f{m+4}{2(m+2)}}.
\end{eqnarray*}
$(iii)$ Substituted \eqref{asy-varphi} into (\ref{bq}) to derive:
\begin{eqnarray*}
\eta_q (x,t,t) & \leqslant & D_0^{-1} \int_0^{\lambda_0} \frac{e^{-\lambda(\phi(t)+R-|x|)}\lambda^{q} }{\langle\lambda|x|\rangle^{(n-1)/2}}\: d\lambda.
\end{eqnarray*}
It is convenient to consider two cases. If $|x|\leqslant (\phi(t)+R)/2$, the estimate becomes
$$
\eta_q (x,t,t) \leqslant  D_1 \int_0^{\lambda_0} e^{-\lambda(\phi(t)+R)/2}\lambda^{q} d\lambda\leqslant D_2 \langle \phi(t)\rangle^{-q-1}.
$$
If $|x|\geqslant (\phi(t)+R)/2$, the resulting bound is different:
\begin{eqnarray*}
\eta_q (x,t,t) & \leqslant  & D_0^{-1} \langle |x|\rangle^{-(n-1)/2}\int_0^{\lambda_0} e^{-\lambda(\phi(t)+R-|x|)}\lambda^{q-(n-1)/2}
                      \: d\lambda\\
            & \leqslant &  D_3\langle |x| \rangle^{-(n-1)/2}\langle \phi(t)-|x|\rangle^{(n-3)/2-q}.
\end{eqnarray*}
Clearly, both results are included into $\eta_q (x,t,t) \leqslant B_2 \langle \phi(t) \rangle^{-(n-1)/2}\langle \phi(t)-|x|\rangle^{(n-3)/2-q}.$
\end{proof}
\begin{rem}
It is easy to verify the above arguments and results are general extension of wave equation by simply putting $m=0$, if one notice that \eqref{ele}.
\end{rem}

\section{Nonlinear integral inequality}
In this section, based on the estimation of test function in previous section, we give the lower bound of weighted functional
\begin{equation}
\label{def:F}
F(t)=\int_{\mathbb{R}^n} u(x,t) \eta_{q}(x,t,t)dx.
\end{equation}
By testing equation \eqref{main} by $\vp_\lambda(x)$, we find that
$$\frac{d^2}{dt^2}\int_{\mathbb{R}^n}u(x,t)\vp_\lambda(x)dx-t^m\lambda^2\int_{\mathbb{R}^n}u(x,t)\vp_\lambda(x)dx=\int_{\mathbb{R}^n}|u|^p\vp_\lambda(x)dx.$$
That is
$$G''(t)-t^m\lambda^2G(t)=\int_{\mathbb{R}^n}|u|^p\vp_\lambda(x)dx,$$
if we define the functionals
$$G(t;\lambda)=\int_{\mathbb{R}^n} u(x,t) \vp_\lambda(x)dx.$$
Applying the Duhamel's principle, we may solve $G(t;\lambda)$ by following integral representation
\begin{eqnarray*}
G(t;\lambda)=\int_{\mathbb{R}^n}u(x,t)\vp_\lambda(x)dx&=&\varepsilon\Phi_1(t,0;\lambda)\int_{\mathbb{R}^n}u_0(x)\vp_\lambda(x)dx\\
&+&\varepsilon\Phi_2(t,0;\lambda)\int_{\mathbb{R}^n}u_1(x)\vp_\lambda(x)dx\\
&+&\int_0^t\Phi_2(t,s;\lambda)\left(\int_{\mathbb{R}^n}|u(s)|^p\vp_\lambda(x)dx\right)ds.
\end{eqnarray*}
Multiplying by $\lambda^q e^{-\lambda(\phi(t)+R)}$, integrating on $[0,\lambda_0]$ and interchanging the order of integration between $\lambda$ and $x$, we derive following identity
\begin{eqnarray}
F(t)=\int_{\mathbb{R}^n}u(x,t)\eta_q(x,t,t)dx&=&\varepsilon\int_{\mathbb{R}^n}u_0(x)\xi_q(x,t,0)dx\nonumber\\
&+&\varepsilon t\int_{\mathbb{R}^n}u_1(x)\eta_q(x,t,0)dx\label{identity}\\
&+&\int_0^t(t-s)\int_{\mathbb{R}^n}|u(s)|^p\eta_q(x,t,s)dxds.\nonumber
\end{eqnarray}
We now in the position to give the iteration argument frame for the critical case. In fact, we have following lower bound of logarithmic type within the nonlinear terms integral.
\begin{prop}
\label{prop:frame}
Suppose that the assumptions in Theorem \ref{Thm:critical} are fulfilled and choose
$q=(n-1)/2-1/p.$ If $F(t)$ is defined in (\ref{def:F}), there exists a positive constant $C=C(n,p,R)$, such that
\begin{equation}
\label{frame}
\langle t\rangle^{\f m4}F(t)  \geqslant \frac{C}{\langle t\rangle}
\int_0^t \frac{t-s}{\langle s\rangle}\frac{(\langle s\rangle^{\f m4}F(s))^p}{
(\log \langle s\rangle)^{p-1}}\: ds
\end{equation}
for all $t\in (0,T_\eps).$
\end{prop}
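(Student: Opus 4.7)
The plan is to derive \eqref{frame} directly from the integral identity \eqref{identity}. Because $u_0, u_1 \geqslant 0$ and $\xi_q, \eta_q$ are positive-weighted integrals of the positive test function $\vp_\lambda$, the two initial-data contributions in \eqref{identity} are non-negative and may be discarded, reducing matters to
$$F(t) \geqslant \int_0^t (t-s) \int_{\mathbb{R}^n} |u(x,s)|^p \eta_q(x,t,s)\,dx\,ds.$$
The remaining task is to bound the inner spatial integral from below by $C\langle t\rangle^{-1-m/4}\langle s\rangle^{mp/4-1}(\log\langle s\rangle)^{1-p} F(s)^p$.

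The inner integral is handled in two moves. First, since $u(\cdot,s)$ is supported in $\{|x|\leqslant R+\phi(s)\}$, Lemma \ref{lem2}(ii) delivers the pointwise lower bound
$$\eta_q(x,t,s) \geqslant B_1 \langle t\rangle^{-1-m/4}\langle\phi(s)\rangle^{-q-1+\f{m+4}{2(m+2)}}$$
throughout the support, which factors out as a constant and leaves $\int |u(x,s)|^p\,dx$. Second, this $L^p$-norm is exchanged for a power of $F(s)$ via H\"older's inequality,
$$F(s) \leqslant \Bigl(\int |u(x,s)|^p\,dx\Bigr)^{1/p}\Bigl(\int_{|x|\leqslant R+\phi(s)}\eta_q(x,s,s)^{p/(p-1)}\,dx\Bigr)^{(p-1)/p}.$$
The specific choice $q=(n-1)/2-1/p$ is tailored precisely so that the exponent $(n-3)/2-q$ appearing in Lemma \ref{lem2}(iii), when raised to the power $p/(p-1)$, collapses to exactly $-1$, producing the borderline radial weight $\langle\phi(s)-|x|\rangle^{-1}$. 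Passing to polar coordinates and splitting at $r=\phi(s)/2$ then gives
$$\int_{|x|\leqslant R+\phi(s)}\langle\phi(s)-|x|\rangle^{-1}\,dx \lesssim \langle\phi(s)\rangle^{n-1}\log\langle\phi(s)\rangle,$$
and the $(p-1)$-st power of this estimate is the source of the $(\log\langle s\rangle)^{p-1}$ loss in \eqref{frame}.

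Combining the two steps produces a bound of the form $\langle t\rangle^{-1-m/4}\langle\phi(s)\rangle^{E}F(s)^p(\log\langle\phi(s)\rangle)^{1-p}$ with the exponent $E=-q-1+\f{m+4}{2(m+2)}-\f{(n-1)(p-2)}{2}$. Using $\langle\phi(s)\rangle\sim\langle s\rangle^{(m+2)/2}$, together with the uniform comparability of $\log\langle\phi(s)\rangle$ and $\log\langle s\rangle$ (a consequence of the convention $\langle s\rangle=3+|s|$, which keeps both logarithms bounded away from zero), the claim reduces to verifying the single algebraic identity $\f{m+2}{2}\,E=\f{mp}{4}-1$. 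After clearing denominators this is, up to an overall sign, exactly $\gamma(m,n,p)=0$, and therefore holds at $p=p_{crit}(m,n)$.

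The main obstacle is the tight exponent bookkeeping: the match with $\gamma(m,n,p)=0$ leaves no slack, so any arithmetic slip -- most hazardously, mis-weighting the factor $(m+2)/2$ that converts between $\phi(s)$ and $s$, or losing the logarithmic gain in the polar splitting -- would break the identification and ruin the critical-case iteration that builds on \eqref{frame} in Section~4. Once the exponent arithmetic is set up correctly, the proof is a direct application of H\"older and the three test-function estimates already supplied by Lemma \ref{lem2}.
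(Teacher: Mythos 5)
Your proposal is correct and follows essentially the same route as the paper: drop the non-negative data terms in \eqref{identity}, lower-bound $\eta_q(x,t,s)$ via Lemma~\ref{lem2}(ii), apply H\"older against $\eta_q(x,s,s)$ with the critical choice $q=(n-1)/2-1/p$ feeding Lemma~\ref{lem2}(iii), and close with the algebraic identity coming from $\gamma(m,n,p)=0$. The only cosmetic difference is the ordering of the H\"older step: the paper performs H\"older first with the composite weight $\eta_q(x,s,s)^{p/(p-1)}/\eta_q(x,t,s)^{1/(p-1)}$ and then invokes (ii)--(iii) on the resulting integral, whereas you factor out the $x$-independent lower bound from (ii) before applying H\"older with $\eta_q(x,s,s)$ alone; since that lower bound does not depend on $x$, the two arrangements produce the identical exponent bookkeeping.
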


\begin{proof}
Let $0\leqslant s<t$. From $F(s)=\int_{\R^n} u(x,s) \eta_{q}(x,s,s)dx$ and H\"{o}lder's inequality,
\begin{equation}
\label{Holder:F1}
\begin{array}{lll}
\d |F(s)|\leqslant \left(\int |u(x,s)|^p \eta_{q}(x,t,s) dx \right)^{1/p} \\
\d\qquad \times \left( \int_{|x|\leqslant \phi(s)+R} \frac{\{\eta_{q}(x,s,s)\}^{p/(p-1)}}
{\{\eta_{q}(x,t,s)\}^{1/(p-1)}}
dx\right)^{(p-1)/p}.
\end{array}
\end{equation}
Substituting estimates $(ii)$ and $(iii)$ from Lemma~\ref{lem2} with $q=(n-1)/2-1/p$,
we can bound the second integral by
$$
C\int_{|x|\leqslant \phi(s)+R} \frac{\langle \phi(s)\rangle ^{-(n-1)p/2(p-1)} \langle \phi(s)-|x| \rangle ^{\{(n-3)/2-q\}p/(p-1)}}
{\langle t\rangle ^{(-\f m4-1)/(p-1)}\langle \phi(s) \rangle ^{-(q+1-\f{m+4}{2(m+2)})/(p-1)}}dx.
$$
This expression simplifies to
\begin{eqnarray*}
C\langle t\rangle ^{(1+\f m4)/(p-1)}\langle \phi(s) \rangle ^{[q+1-\f{m+4}{2(m+2)}-(n-1)p/2]/(p-1)}
\int_{|x|\leqslant \phi(s)+R} \langle \phi(s)-|x| \rangle ^{\{(n-3)/2-q\}p/(p-1)} dx.
\end{eqnarray*}
The latter integral is actually
$$
\int_{|x|\leqslant \phi(s)+R} \langle \phi(s)-|x| \rangle ^{-1} dx\leqslant C\langle \phi(s)\rangle^{n-1}\log \langle \phi(s)\rangle,
$$
so the final estimate of the second integral in (\ref{Holder:F1}) becomes
$$
C\langle t\rangle ^{(1+\f m4)/(p-1)}
 \langle \phi(s)\rangle ^{\f1p+\f{n-1}2-\f{m+4}{2(m+2)}(p-1)^{-1}}\log\langle s\rangle.
$$
From Lemma~\ref{lem2} and identity \eqref{identity}, we see that $F(t)\geqslant 0.$ Thus, (\ref{Holder:F1}) gives
$$
F(s)^p\leqslant C\langle t\rangle^{1+\f m4}  \langle \phi(s) \rangle^{1-\f1p+\f{n-1}2(p-1)-\f{m+4}{2(m+2)}} (\log\langle s\rangle)^{p-1}
\left(\int |u(x,t)|^p \eta_{q}(x,t,s) dx\right),
$$
i.e,
$$\int |u(x,t)|^p \eta_{q}(x,t,s) dx\geqslant\frac{ F(s)^p}{ \langle t\rangle^{1+\frac m4} \langle \phi(s) \rangle^{1-\f1p+\f{n-1}2(p-1)-\f{m+4}{2(m+2)}}
(\log\langle s\rangle)^{p-1}}.$$
Inserting this lower bound into \eqref{identity} and combining estimates $(i)$ in Lemma \ref{lem2}, we have that
\begin{eqnarray}
F(t)  &\geqslant&   C_1(u_0)\e \langle t\rangle^{-\f m4}+C_2(u_1)\e t\langle t\rangle^{-\f m4-1}\nonumber\\
 &+&\frac{C}{\langle t\rangle^{1+\f m4}}\int_0^t\frac{ (t-s)F(s)^p\: ds}{  \langle \phi(s) \rangle^{1-\f1p+\f{n-1}2(p-1)-\f{m+4}{2(m+2)}}
(\log\langle s\rangle)^{p-1}}.\label{frame-sub}
\end{eqnarray}
Since $p=p_s(m,n)$ implies that
$$\f{m}4p+[1-\f1p+\f{n-1}2(p-1)-\f{m+4}{2(m+2)}]\f{m+2}2=1,$$
one may reduce
$$F(t)\langle t\rangle^{\f m4}\geqslant \f{C}{\langle t\rangle}\int_0^t\f{(t-s)(F(s) \langle s\rangle^{\f m4})^p}{ \langle s \rangle(\log\langle s\rangle)^{p-1}}ds,$$
which completes the proof of this proposition.
\end{proof}

Lastly, as a preparation of iteration argument proceeded in next section, we show following lower bound of the $L^p$ norm of $u$.
The result of this lemma can be summarized from the proof of Section 2 in \cite{He-Witt-Yin17},
where the modified Bessel function is introduced as the test function. However, we here apply different approach by utilizing the estimation of test function $\eta_q(x,t,t)$ and the nonlinear integral inequality \eqref{frame-sub}.
\begin{lem}\label{lem:lp_lower}
Suppose that the assumptions in Theorem \ref{Thm:sub-critical} are fulfilled.
Then there exists large $T_0$ which is independent with $u_0,\ u_1$ and $\varepsilon$, for any $t>T_0$ and $p>1$,
\begin{equation}\label{lp_lower}
\int_{\mathbb{R}^n}|u(x,t)|^pdx\geqslant C_0\varepsilon^p(1+t)^{\f p2}(1+\phi(t))^{n-1-\frac{n}2 p}
\end{equation}
where the constant $C_0$ depends on $m,\ p$, $R$ and $T_0$.
\end{lem}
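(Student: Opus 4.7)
The plan is to deduce the $L^p$ lower bound by combining a pointwise lower bound on the test-function functional $F(t)$ (obtained from \eqref{identity} by discarding the nonnegative nonlinear term) with an upper bound on the $L^{p/(p-1)}$-norm of $\eta_q(\cdot,t,t)$ from Lemma \ref{lem2}(iii), and then inverting via Hölder's inequality. I would fix $q=(n-1)/2$, which satisfies both $q>-m/(2(m+2))$ and $q>(n-3)/2$, so that Lemma \ref{lem2}(i) and (iii) are simultaneously available.

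Since $|u|^p\geqslant 0$, the nonlinear integral in \eqref{identity} may be dropped; on the support $|x|\leqslant R$ of $u_0, u_1$, Lemma \ref{lem2}(i) gives $\xi_q(x,t,0)\geqslant A_0\langle\phi(t)\rangle^{-m/(2(m+2))}$ and $\eta_q(x,t,0)\geqslant B_0\langle\phi(t)\rangle^{-(m+4)/(2(m+2))}$. With $\phi(t)\sim t^{(m+2)/2}$ and $t\cdot t^{-(m+4)/4}=t^{-m/4}$, both linear contributions are of the same order $\e\langle t\rangle^{-m/4}$, so that
$$F(t)\geqslant C\e\langle t\rangle^{-m/4},\qquad t\geqslant T_0,$$
for some large $T_0$ and some $C>0$, provided $(u_0,u_1)\not\equiv 0$. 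Next, Hölder's inequality with conjugate exponents $p$ and $p'=p/(p-1)$ together with the finite-propagation support $\mathrm{supp}\,u(\cdot,t)\subset\{|x|\leqslant\phi(t)+R\}$ yields $\int_{\mathbb{R}^n}|u(x,t)|^p dx\geqslant F(t)^p/\norm{\eta_q(\cdot,t,t)}_{L^{p'}}^p$. For $q=(n-1)/2$, Lemma \ref{lem2}(iii) reads $\eta_q(x,t,t)\leqslant B_2\langle\phi(t)\rangle^{-(n-1)/2}\langle\phi(t)-|x|\rangle^{-1}$; passing to polar coordinates and splitting the radial integral at $r=\phi(t)/2$ (the outer piece is finite since $p'>1$ makes $\langle\phi(t)-r\rangle^{-p'}$ integrable, the inner piece is controlled by $\phi(t)^{n-1}$) gives $\norm{\eta_q(\cdot,t,t)}_{L^{p'}(|x|\leqslant\phi(t)+R)}^p\leqslant C\langle\phi(t)\rangle^{(n-1)(p-2)/2}$.

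Assembling these bounds yields $\int|u|^p dx\geqslant C\e^p\langle t\rangle^{-mp/4}\langle\phi(t)\rangle^{-(n-1)(p-2)/2}$, and the arithmetic identity $-mp/4-(m+2)(n-1)(p-2)/4=p/2+(m+2)(n-1-np/2)/2$ (using $\phi(t)\sim t^{(m+2)/2}$) rewrites this as the claimed $C_0\e^p(1+t)^{p/2}(1+\phi(t))^{n-1-np/2}$. The one delicate point is the balancing choice of $q$: it must exceed $(n-1)/2-1/p$ so that $\eta_q^{p'}$ has no logarithmic correction in the cone integral, while still lying in the admissible range for Lemma \ref{lem2}(i) and (iii); the uniform selection $q=(n-1)/2$ accomplishes both for all $n\geqslant 1$, $m>0$, and $p>1$.
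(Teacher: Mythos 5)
Your proposal is correct and follows essentially the same route as the paper: drop the nonnegative Duhamel term in \eqref{identity} and use Lemma~\ref{lem2}(i) to get $F(t)\gtrsim\e\langle t\rangle^{-m/4}$, then invert via H\"older's inequality after bounding $\|\eta_q(\cdot,t,t)\|_{L^{p'}}$ with Lemma~\ref{lem2}(iii). The only cosmetic difference is that you commit to the concrete choice $q=(n-1)/2$, whereas the paper works with any $q>(n-3)/2+1/p'$; your choice satisfies this strict inequality (since $p'>1$), so the boundary-layer integral is free of logarithmic corrections exactly as in the paper's computation, and the final exponent bookkeeping matches.
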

\begin{proof}
Making use of (\ref{frame-sub}) and H\"{o}lder's inequality, we get
\begin{equation}
\label{Holder_u^p}
C_1(u_0,u_1)\e \langle t\rangle^{-\f m4}\le |F(t)| \le \left(\int_{\R^n}|u(x,t)|^pdx\right)^{1/p}
\cdot (I(t))^{1/p'},
\end{equation}
where we set
$$
I(t)=\int_{|x|\le \phi(t)+R} \{\eta_{q}(x,t,t)\}^{p'}dx,
$$
and $p'=p/(p-1)$. If we have shown the upper bound of $I(t)$ satisfies
\begin{equation}\label{upp_I(t)}I(t)\le C\langle \phi(t)\rangle^{n-1-(n-1)p'/2},\end{equation}
Then \eqref{Holder_u^p} can be derived from \eqref{lp_lower} directly computation. Following we show \eqref{upp_I(t)}.
By using the estimates (iii) in Lemma \ref{lem2} with $q>(n-3)/2+1/p'$, one obtains
\begin{eqnarray*}
I(t)&\le& C\langle \phi(t)\rangle^{-(n-1)p'/2} \int_{|x|\le \phi(t)+R}\langle \phi(t)-|x|\rangle^{(n-3)p'/2-p'q}dx
\\ &=& C\langle \phi(t)\rangle^{-(n-1)p'/2}\int_{0}^{\phi(t)+R} r^{n-1} \langle \phi(t)-r\rangle^{(n-3)p'/2-p'q} dr
\end{eqnarray*}
Changing the variables by $\phi(t)-r=\rho$, we have
\begin{eqnarray*}
I(t)&\le& C\langle \phi(t)\rangle^{-(n-1)p'/2} \int_{-R}^{\phi(t)}(\phi(t)-\rho)^{n-1}(3R+|\rho|)^{(n-3)p'/2-p'q}d\rho\\
&=& C\langle \phi(t)\rangle^{-(n-1)p'/2}\{I_1(t)+I_2(t)\},
\end{eqnarray*}
where
$$I_1(t)=\int_{-R}^{\phi(t)/2} (\phi(t)-\rho)^{n-1} (3R+|\rho|)^{(n-3)p'/2-p'q} d\rho$$
and
$$I_2(t)=\int_{\phi(t)/2}^{\phi(t)} (\phi(t)-\rho)^{n-1} (3R+\rho)^{(n-3)p'/2-p'q} d\rho.$$
Since $(n-3)p'/2-p'q+1<0$, integration by parts yields that
\begin{eqnarray*}
I_2(t)&\le& C\langle \phi(t)\rangle^{n-1} (3R+\phi(t))^{(n-3)p'/2-p'q+1} \\
&&\qquad-C\int_{\phi(t)/2}^{\phi(t)} (\phi(t)-\rho)^{n-2} (3R+\rho)^{(n-3)p'/2-p'q+1}d\rho\\
&\le& C\langle \phi(t)\rangle^{n-1+(n-3)p'/2-p'q+1}.
\end{eqnarray*}
Similarly, we have
\begin{eqnarray*}
I_1(t)&\le& C(\phi(t)+R)^{n-1}-
\int_{-R}^{\phi(t)/2}(\phi(t)-\rho)^{n-2}(3R+\rho)^{(n-3)p'/2-p'q+1}d\rho\\
&\le& C(\phi(t)+R)^{n-1}\le C\langle \phi(t)\rangle^{n-1}.
\end{eqnarray*}
Therefore, we obtain \eqref{upp_I(t)}.
\end{proof}

\section{Iteration argument}
In this section, we apply iteration argument to give the proof of Theorem \ref{Thm:sub-critical} and Theorem \ref{Thm:critical}.
\subsection{Proof of Theorem \ref{Thm:sub-critical}}
In this subsection, we devote to the sub-critical case. Firstly, we define the functional:
$$G(t)=\int_{\mathbb{R}^n} u dx.$$
As $u$ has compact support in $B(0,\phi(t)+R)$, by H\"{o}lder inequality we have:
$$\int_{\mathbb{R}^n}|u|^pdx\geqslant (R+\phi(t))^{-n(p-1)}|G(t)|^p\ \ \ \ \ \mbox{for} \ \ t\geqslant0.$$
Moreover, choosing the test function $\zeta=\zeta(x,s)$ in \eqref{weak} to satisfy $\zeta\equiv1$ in $\{(x,s)\in\mathbb{R}^n\times[0,t]:|x|\leqslant \phi(s)+R\}$, the integration by parts gives
$$G''(t)=\int_{\mathbb{R}^n}|u|^pdx.$$
Combing the above and integrate twice, we have iteration frame inequality.
\begin{equation}\label{iter2}G(t)\geqslant \int_0^td\tau\int_0^\tau(R+\phi(s))^{-n(p-1)}|G(s)|^pds.\end{equation}
Second, in order to initiate the iteration argument, we need following lower bound estimate, by plugging the lower bound estimate \eqref{lp_lower} and integrate twice,
\begin{eqnarray*}
G(t)&\geqslant&\int_0^td\tau\int_0^\tau C_1\varepsilon^p(1+s)^{\f p2}(1+\phi(s))^{n-1-\frac{n}{2}p}ds\\
&\geqslant&C_1\varepsilon^p\int_{T_0}^t\int_{T_0}^\tau(1+s)^{\f p2+\f {m+2}2(n-1-\f n2 p)}ds\\
&\geqslant&C_1\varepsilon^p\int_{T_0}^t(1+\tau)^{-\mu-(n+\mu-1)\frac p2}d\tau\int_{T_0}^\tau(1+s)^{\f {m+2}2(n-1)+\mu}ds\\
&\geqslant&C_1\varepsilon^p(1+t)^{-\mu-(n+\mu-1)\frac p2}\int_{T_0}^td\tau\int_{T_0}^\tau(s-T_0)^{\f {m+2}2(n-1)+\mu}ds,
\end{eqnarray*}
where $\mu=\f {mn}2$.
That implies
\begin{equation}\label{j=1}
G(t)\geqslant C_2\varepsilon^p(1+t)^{-\mu-(n+\mu-1)\frac p2}(t-T_0)^{\f {m+2}2(n-1)+\mu+2}\ \ \mbox{for}\ \ t>T_0
\end{equation}
where $C_2=\frac{C_1}{(\f {m+2}2(n-1)+\mu+1)(\f {m+2}2(n-1)+\mu+2)}$.

Now we begin our iteration argument. Assume that
\begin{equation}\label{iter-assu}
G(t)>D_j(1+t)^{-a_j}(t-T_0)^{b_j}\ \ \ \mbox{for}\ \ t>T_0,\ \ j=1,2,3\ \cdots
\end{equation}
with positive constants $D_j,\ a_j$ and $b_j$ determined later. \eqref{j=1} asserts \eqref{iter-assu} is true for $j=1$ with
\begin{equation}\label{series1}D_1=C_2\varepsilon^p,\ \ a_1=\mu+(n+\mu-1)\frac p2,\ \ \ b_1=\f {m+2}2(n-1)+\mu+2.\end{equation}
Plugging \eqref{iter-assu} into \eqref{iter2}, we have for $t>T_0$
\begin{eqnarray*}
G(t)&>&C_0\int_0^td\tau\int_0^\tau(M+s)^{-\f {m+2}2n(1-p)}|G(s)|^pds\\
&>&C_0\int_{T_0}^td\tau\int_{T_0}^\tau(1+s)^{-\f {m+2}2n(1-p)}D_j^p(1+s)^{-pa_j}(s-T_0)^{pb_j}ds\\
&>&C_0D_j^p(1+t)^{-\mu-\f {m+2}2 n(1-p)-pa_j}\int_{T_0}^t\int_{T_0}^\tau(s-T_0)^{\mu+pb_j}dsd\tau\\
&>&\frac{C_0D_j^p}{(\mu+pb_j+1)(\mu+pb_j+2)}(1+t)^{-\mu-\f {m+2}2 n(1-p)-pa_j}(t-T_0)^{\mu+pb_j+2}.
\end{eqnarray*}
So the assumption \eqref{iter-assu} is true if the sequence $\{D_j\}$, $\{a_j\}$, $\{b_j\}$ are define by
\begin{equation}\label{series2}D_{j+1}\geqslant\frac{C_0}{(\mu+pb_j+2)^2}D_j^p,\ \ a_{j+1}=\mu+\f {m+2}2 n(p-1)+pa_j,\ \ b_{j+1}=\mu+2+pb_j.\end{equation}
It follows from \eqref{series1} and \eqref{series2} that for $j=1,2,3\cdots$
\begin{eqnarray}
a_j&=&[\mu+(n+\mu-1)\frac p2+\f {m+2}2 n+\frac{\mu}{p-1}]p^{j-1}-(\f {m+2}2 n+\frac{\mu}{p-1})\nonumber\\
&=&\alpha p^{j-1}-(\f {m+2}2 n+\frac{\mu}{p-1})\label{a_j}\\
b_j&=&[\f {m+2}2(n-1)+\mu+2+\frac{\mu+2}{p-1}]p^{j-1}-\frac{\mu+2}{p-1}\nonumber\\
&=&\beta p^{j-1}-\frac{\mu+2}{p-1}\label{b_j}
\end{eqnarray}
where we denote the positive constants $$\alpha=\mu+(n+\mu-1)\frac p2+\f {m+2}2 n+\frac{\mu}{p-1},$$ $$\beta=\f {m+2}2(n-1)+\mu+2+\frac{\mu+2}{p-1}.$$
We employ the inequality
$$b_{j+1}=pb_j+\mu+2<p^j[\f {m+2}2(n-1)+\mu+2+\frac{\mu+2}{p-1}],$$
for $D_{j+1}$ to obtain
$$D_{j+1}\geqslant C_3\frac{D^p_j}{p^{2j}}$$
where $$C_3=\frac{C_0}{(\f {m+2}2(n-1)+\mu+2+\frac{\mu+2}{p-1})^2}.$$
Hence,
\begin{eqnarray*}
\log D_j&\geqslant& p\log D_{j-1}-2(j-1)\log p+\log C_3\\
&\geqslant& p^2\log D_{j-2}-2(p(j-2)+(j-1))\log p+(p+1)\log C_3\\
&\geqslant&\cdots\\
&\geqslant&p^{j-1}\log D_1-2\log p\sum_{k=1}^{j-1}kp^{j-1-k}+\log C_3\sum_{k=1}^{j-1}p^k.
\end{eqnarray*}
Direct calculation gives
$$\sum_{k=1}^{j-1}kp^{j-1-k}=\frac{1}{p-1}(\frac{p^j-1}{p-1}-j)$$ and
$$\sum_{k=1}^{j-1}p^k=\frac{p-p^j}{1-p},$$
which yields
\begin{eqnarray*}
\log D_j&\geqslant& p^{j-1}\log D_1-\frac{2\log p}{p-1}(\frac{p^j-1}{p-1}-j)+\log C_3\frac{p-p^j}{1-p}\\
&=&p^{j-1}\bigg(\log D_1-\frac{2p\log p}{(p-1)^2}+\frac{p\log C_3}{p-1}\bigg)+\frac{2\log p}{p-1}j+\frac{2\log p}{(p-1)^2}+\frac{p\log C_3}{1-p}
\end{eqnarray*}
Consequently for $j>\left[\frac{p\log C_3}{2\log p}-\frac{1}{p-1}\right]+1$,
\begin{equation}\label{D_j}D_j\geqslant\exp\{p^{j-1}(\log D_1-S_p(\infty))\}\end{equation}
with
$$ S_p(\infty):=\frac{2p\log p}{(p-1)^2}-\frac{p\log C_3}{p-1}.$$
Inserting \eqref{a_j}, \eqref{b_j} and \eqref{D_j} into \eqref{iter-assu} gives
\begin{eqnarray}
G(t)&\geqslant&\exp\big(p^{j-1}(\log D_1-S_p(\infty))\big)(1+t)^{-\alpha p^{j-1}+(n+\frac{\mu}{p-1})}(t-T_0)^{\beta p^{j-1}-\frac{\mu+2}{p-1}}\nonumber\\
&\geqslant&\exp\big(p^{j-1}J(t)\big)(1+t)^{n+\frac\mu{p-1}}(t-T_0)^{-\frac{\mu+2}{p-1}}\label{contr}
\end{eqnarray}
where
$$J(t):=\log D_1-S_p(\infty)-\alpha\log(1+t)+\beta\log(t-T_0).$$
For $t>2T_0+1$, we have
\begin{eqnarray*}
J(t)&\geqslant& \log D_1-S_p(\infty)-\alpha\log(2t-2T_0)+\beta\log(t-T_0)\\
&\geqslant&\log D_1-S_p(\infty)+(\beta-\alpha)\log(t-T_0)-\alpha\log2\\
&=&\log(D_1\cdot(t-T_0)^{\beta-\alpha})-S_p(\infty)-\alpha\log2.
\end{eqnarray*}
Note that
\begin{eqnarray*}
\beta-\alpha&=&\left [\f {m+2}2(n-1)+\mu+2+\frac{\mu+2}{p-1}\right]-\left[\mu+(n+\mu-1)\frac p2+\f {m+2}2 n+\frac{\mu}{p-1}\right]\\
&=& \f {-((m+2)\f n2-1)p^2-((m+2)(1-\f n2)-3)p+(m+2)}{2(p-1)}\\
&>&0 \quad\quad\quad \mbox{for}\quad 1<p<p_{crit}(m,n).
\end{eqnarray*}
Thus if $$t>\max\{T_0+(\frac{e^{[S_p(\infty)+\alpha\log2]+1}}{C_2\varepsilon^p})^{2(p-1)/\gamma(m,n,p)},2T_0+1\},$$
we then get $J(t)>1$, and this in turn gives $G(t)\rightarrow\infty$ by taking $j\rightarrow\infty$ in \eqref{contr}. Therefore, for $\varepsilon<\varepsilon_0$, we obtain the desired upper bound,
$$T\leqslant C_4\varepsilon^{-\frac{2p(p-1)}{\gamma(m,n,p)}}$$
with
$$C_4:=\left(\frac{e^{(S_p(\infty)+\alpha\log2)+1}}{C_2}\right)^{2(p-1)/\gamma(m,n,p)}.$$
This completes our proof of Theorem \ref{Thm:sub-critical}.

\subsection{Proof of Theorem \ref{Thm:critical}}
In this subsection, we focus on the proof of critical case. The following lemma initiates the iteration.
\begin{lem}
\label{lem:initiate}
Suppose that the assumptions in Theorem \ref{Thm:critical} are fulfilled.
Then, $\d F(t)=\int_{\R^n} u(x,t) \eta_{q}(x,t,t)dx$ for $t\ge 3/2$ satisfies that
\begin{equation}
\label{initiate}
\langle t\rangle^{\f m4}F(t)\ge M\e^p \log \{t/(3/2)\},
\end{equation}
where $M=C_0B_1/3^3$ and $C_0$ is the one in Lemma \ref{lem:lp_lower}.
\end{lem}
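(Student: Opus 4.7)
The plan is to plug the $L^p$-norm lower bound of Lemma \ref{lem:lp_lower} into the integral identity \eqref{identity} for $F(t)$, using the pointwise lower bound for $\eta_q(x,t,s)$ from Lemma \ref{lem2}(ii). Since $u_0, u_1 \ge 0$ and the test functions $\xi_q(\cdot, t, 0)$, $\eta_q(\cdot, t, 0)$ are nonnegative (Lemma \ref{lem2}(i)), the first two summands of \eqref{identity} are nonnegative and may be discarded, leaving
\begin{equation*}
F(t) \ge \int_0^t (t-s) \int_{\mathbb{R}^n} |u(x,s)|^p \, \eta_q(x,t,s) \, dx \, ds.
\end{equation*}
Because $u(\cdot, s)$ is supported in $\{|x| \le \phi(s) + R\}$, Lemma \ref{lem2}(ii) applies throughout the support and its lower bound factors out of the inner integral. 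Combining this with Lemma \ref{lem:lp_lower} (valid for $s > T_0$; the contribution of $[0, T_0]$ is nonnegative and simply dropped) yields
\begin{equation*}
F(t) \ge B_1 C_0 \varepsilon^p \langle t\rangle^{-1-m/4} \int_{T_0}^t (t-s)\, \Psi(s)\, ds,
\end{equation*}
where $\Psi(s) := \langle \phi(s)\rangle^{-q-1+\frac{m+4}{2(m+2)}} (1+s)^{p/2} (1+\phi(s))^{n-1-np/2}$.

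Fixing $q = (n-1)/2 - 1/p$ as in Proposition \ref{prop:frame}, the key reduction is that $\Psi(s) \gtrsim \langle s\rangle^{-1}$ on $[3/2,\infty)$. Since $\phi(s)$ is comparable to $s^{(m+2)/2}$ for $s \ge 3/2$, the total $s$-exponent in $\Psi(s)$ equals
\begin{equation*}
\frac{p}{2} + \frac{m+2}{2}\left[\left(n-1-\tfrac{np}{2}\right) + \left(-q - 1 + \tfrac{m+4}{2(m+2)}\right)\right],
\end{equation*}
and a direct manipulation of the criticality relation $\gamma(m, n, p) = 0$ (dividing through by $p$ and rearranging) collapses this expression to exactly $-1$. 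Hence $\Psi(s)$ and $\langle s\rangle^{-1}$ are comparable on $[3/2, \infty)$ up to a constant depending only on $m, n, p, R$.

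It remains to estimate $\int_{T_0}^t (t - s)\langle s\rangle^{-1}\, ds$. Restricting $s$ to $[\max(T_0, 3/2),\, t/2]$ forces $t - s \ge t/2$ and produces the logarithm: the integral is bounded below by a constant multiple of $\langle t\rangle \log(t/(3/2))$. Combined with the prefactor $\langle t\rangle^{-1-m/4}$, this gives
\begin{equation*}
\langle t\rangle^{m/4} F(t) \ge c\, \varepsilon^p \log(t/(3/2))
\end{equation*}
for $t$ beyond a suitable threshold. Each remaining numerical loss takes the form $\langle\,\cdot\,\rangle \le 3(\cdot)$, applied three times (namely to $\langle t\rangle$ relative to $t$, to $t-s$ relative to $t$, and to $\langle s\rangle$ relative to $s$), which accumulates to $3^3 = 27$ and yields the announced constant $M = C_0 B_1/27$. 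On the small range $3/2 \le t \le T_0$, the right-hand side $M\varepsilon^p \log(t/(3/2))$ is $O(\varepsilon^p)$ and is absorbed, for small $\varepsilon$, by the nonnegative linear contribution of order $\varepsilon$ in \eqref{identity}.

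The main obstacle is the exponent-tallying step: after substituting $q = (n-1)/2 - 1/p$ and invoking the criticality relation $\gamma(m,n,p) = 0$, verifying that the combined $s$-exponent of $\Psi(s)$ is precisely $-1$ is the only substantive algebraic manipulation. Everything else --- discarding nonnegative integral tails, integrating against a logarithmic kernel, and matching constants to $C_0 B_1/27$ --- is routine bookkeeping.
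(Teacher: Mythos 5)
Your proposal follows the paper's route exactly: insert the $L^p$ lower bound of Lemma \ref{lem:lp_lower} and the lower bound for $\eta_q(x,t,s)$ from Lemma \ref{lem2}(ii) into the Duhamel identity \eqref{identity}, use the criticality relation $\gamma(m,n,p)=0$ to collapse the combined $s$-exponent to $-1$, and then extract a logarithm from $\int (t-s)/s\,ds$. The only differences are cosmetic bookkeeping: the paper produces the log by integrating $\int_1^t (t-s)/s\,ds$ by parts and then restricting to $[2t/3,t]$, while you restrict $s\le t/2$ to force $t-s\ge t/2$; and you explicitly discard the $[0,T_0]$ tail of the time integral, which the paper glosses over by starting its integral at $s=1$.
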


\begin{proof}
Putting the estimates \eqref{lp_lower} and (ii) with $q=(n-1)/2-1/p>0$ in Lemma \ref{lem2} into \eqref{identity}, we
get
$$F(t)\ge\int_0^t(t-s)\int_{}|u(s)|^p\eta_q(x,t,s)dxds \ge \frac{C_0B_1\e^p}{\langle t\rangle^{1+\f m4}}\int_{0}^{t}\frac{\langle s\rangle^{\f p2}(t-s)}
{\langle \phi(s)\rangle^{q+pn/2-(n-1)+1-\f{m+4}{2(m+2)}}}ds.$$
Since $\gamma(m,n,p)=0$ implies that
\begin{eqnarray*}
&&-\f p2+[q+\frac{np}{2}-(n-1)+1-\f{m+4}{2(m+2)}]\f{m+2}{2}\\
&=&-\f1{2p}\gamma(m,n,p)+1=1,
\end{eqnarray*}
the above inequality may simplify as
$$\langle t\rangle^{\f m4}F(t)\ge\frac{C_0B_1\e^p}{3^2 t}\int_{1}^{t}\frac{t-s}{s}ds.$$
Then for $t\ge 3/2$, the integration by parts gives
$$\langle t\rangle^{\f m4}F(t)\ge \frac{C_0B_1\e^p}{3^2 t}\int_{2t/3}^{t}\log s ds\ge\frac{C_0B_1\e^p}{3^3} \log (2t/3).$$
The proof is complete.
\end{proof}

As the iteration frame of the logarithmic type has been setup in Proposition \ref{prop:frame}, combing the initial estimate \eqref{initiate} in Lemma \ref{lem:initiate}, we may establish following Proposition by induction on $j$.
\begin{prop}
\label{prop:j-step}
Suppose that the assumptions in Theorem \ref{Thm:critical} are fulfilled.
Then, $\d F(t)=\int_{\R^n} u(x,t) \eta_{q}(x,t,t)dx$ for $t\ge l_j $ $(j\in \N)$ satisfies that
\begin{equation}
\label{j-step}
\langle t\rangle^{\f m4}F(t)\ge C_j (\log \langle t\rangle)^{-b_j}\left\{ \log \left(t/l_j \right)\right\}^{a_j},
\end{equation}
where $\d l_j=l_0+\sum_{k=1}^{j}2^{-(k+1)}=2-2^{-(j+1)}$ $(j\in\N)$ with $l_0=3/2$.
Here, $a_j$, $b_j$ and $C_j$ are defined by
\begin{equation}
\label{a_j,b_j}
a_j=\frac{p^{j+1}-1}{p-1}\quad \mbox{and}\quad b_j=p^{j}-1,
\end{equation}
\begin{equation}
\label{ind_C_j}
C_j=\exp\{p^{j-1}(\log(C_1(2p)^{-S_j}E^{1/(p-1)})-\log E^{1/(p-1)})\}\quad (j\ge 2),
\end{equation}
\begin{equation}
\label{C_1}
C_1=N\e^{p^2},
\end{equation}
where $C$ is the one in (\ref{frame}) and
\begin{equation}
\label{S_j,E}
N=\frac{C M^p}{3^27(p+1)},\quad S_j=\sum_{i=1}^{j-1}\frac{i}{p^i},\quad E=\frac{C(p-1)}{2^33^2p^2}.
\end{equation}
\end{prop}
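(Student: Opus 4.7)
The plan is an induction on $j \ge 1$ in which, at each step, the current lower bound on $\langle t\rangle^{\f m4}F(t)$ is substituted into the integral inequality \eqref{frame} of Proposition \ref{prop:frame}. For the base case $j=1$, I plug the estimate $\langle s\rangle^{\f m4}F(s) \ge M\e^p\log(s/(3/2))$ from Lemma \ref{lem:initiate} (valid for $s \ge l_0 = 3/2$) into \eqref{frame}. Pulling out $(\log\langle t\rangle)^{-(p-1)}$ (since $\log\langle s\rangle \le \log\langle t\rangle$), restricting $s\in[l_1,t]$ with $l_1 = 7/4$, and applying $\log(s/(3/2)) \ge \log(s/l_1)$ together with $\langle s\rangle \le 3s$, Fubini's theorem converts $\int_{l_1}^t (t-s)\log^p(s/l_1)/s\,ds$ into $\tfrac{1}{p+1}\int_{l_1}^t \log^{p+1}(\tau/l_1)\,d\tau$. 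Restricting $\tau\in[t/2,t]$ (valid for $t\ge 4l_1$) gives a factor $\tfrac{t}{2}\bigl(\tfrac{1}{2}\log(t/l_1)\bigr)^{p+1}$; together with $t/\langle t\rangle \ge 1/2$, this yields \eqref{j-step} at $j=1$ with $a_1 = p+1$, $b_1 = p-1$, and $C_1 = N\e^{p^2}$.

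For the inductive step, assume \eqref{j-step} at stage $j$ and substitute into \eqref{frame}. Using the identity $pb_j + p - 1 = b_{j+1}$ gives
$$\langle t\rangle^{\f m4}F(t) \;\ge\; \f{C\,C_j^p(\log\langle t\rangle)^{-b_{j+1}}}{\langle t\rangle}\int_{l_j}^t \f{(t-s)(\log(s/l_j))^{pa_j}}{\langle s\rangle}\,ds.$$
The slicing trick then enters: restrict $s \in [l_{j+1},t]$, use $\log(s/l_j) \ge \log(s/l_{j+1})$ (since $l_{j+1} > l_j$), apply Fubini to reduce to $\tfrac{1}{pa_j+1}\int_{l_{j+1}}^t \log^{pa_j+1}(\tau/l_{j+1})\,d\tau$, and lower-bound by restricting $\tau \in [t/2,t]$. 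Since $pa_j + 1 = a_{j+1}$ and $t/\langle t\rangle \ge 1/2$, the final bound has the form \eqref{j-step} at stage $j+1$, and its prefactor produces a recursion of the shape $C_{j+1} = C_j^p/(2p)^j$ once numerical constants are collected. A direct algebraic check using $S_{j+1} = S_j + j/p^j$ then verifies that this recursion integrates to the closed-form expression \eqref{ind_C_j}.

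The main obstacle is tracking the multiplicative constants sharply enough at the inductive step. Specifically, the integral $\int_{l_{j+1}}^t \log^{pa_j+1}(\tau/l_{j+1})\,d\tau$ must be lower-bounded by $t\cdot\log^{a_{j+1}}(t/l_{j+1})$ up to a constant of size only $(2p)^j$; a naive midpoint estimate on $[t/2, t]$ loses a factor $2^{a_{j+1}}$, which would be far too weak because $a_{j+1}$ grows like $p^{j+2}$. The slicing increment $l_{j+1} - l_j = 2^{-(j+2)}$ is the choice that keeps $l_j \nearrow 2$ bounded while ensuring the per-step logarithmic losses accumulate into the convergent series $S_j = \sum_{i=1}^{j-1} i/p^i$ appearing in \eqref{ind_C_j}. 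Finally, in the regime $t \in [l_{j+1}, 4 l_{j+1}]$ where the restriction $\tau \in [t/2,t]$ fails to give $\log(\tau/l_{j+1}) \ge \tfrac{1}{2}\log(t/l_{j+1})$, the right-hand side of \eqref{j-step} is uniformly small and the bound holds trivially by the positivity of $F$.
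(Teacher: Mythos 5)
The paper itself omits the proof, deferring to Proposition~5.3 of Wakasa--Yordanov \cite{WY1801} (with $F(t)$ replaced by $\langle t\rangle^{m/4}F(t)$); so the comparison is against the slicing iteration of that reference.

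Your overall architecture (induction, insertion of the step-$j$ bound into \eqref{frame}, peeling off $(\log\langle t\rangle)^{-b_{j+1}}$, the Fubini/integration-by-parts reduction yielding the factor $1/(pa_j+1)=1/a_{j+1}$) is the right skeleton, and you correctly flag the central danger: a naive midpoint restriction to $[t/2,t]$ costs a factor $2^{a_{j+1}}$, which is fatal because $a_{j+1}\sim p^{j+2}$. But your inductive step does not actually escape that danger. You first replace $\log(s/l_j)$ by the smaller $\log(s/l_{j+1})$ on $[l_{j+1},t]$, reducing to $\int_{l_{j+1}}^t\log^{a_{j+1}}(\tau/l_{j+1})\,d\tau$, and only then restrict to $\tau\in[t/2,t]$. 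Once $l_j$ has been discarded in favor of $l_{j+1}$, the slicing increment $l_{j+1}-l_j=2^{-(j+2)}$ plays no role whatsoever: both the integrand and the target are measured from the same base $l_{j+1}$, and any restriction to a subinterval of $[l_{j+1},t]$ of length $\sim t$ incurs exactly the $2^{a_{j+1}}$-type loss you yourself declared ``far too weak.'' Worse, the bound you say ``must'' hold,
\begin{equation*}
\int_{l_{j+1}}^t\log^{a_{j+1}}(\tau/l_{j+1})\,d\tau \;\ge\; c\,t\,\log^{a_{j+1}}(t/l_{j+1})
\end{equation*}
with $c\sim(2p)^{-j}$, is simply false uniformly in $t\ge l_{j+1}$: writing $t=l_{j+1}(1+\delta)$, the left side is $\sim l_{j+1}\delta^{a_{j+1}+1}/(a_{j+1}+1)$ while the right side is $\sim c\,l_{j+1}\delta^{a_{j+1}}$, so the ratio tends to $0$ as $\delta\to 0^+$. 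The final paragraph's fallback (``for $t\in[l_{j+1},4l_{j+1}]$ the RHS of \eqref{j-step} is uniformly small'') is also unsubstantiated and does not rescue the regime $t\ge 4l_{j+1}$, where the $2^{a_{j+1}}$ loss persists.

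The correct use of the slicing keeps the base $l_j$ and exploits the gap $l_{j+1}/l_j>1$ to restrict the \emph{time} interval, not the logarithm's base. After integration by parts one has $\frac{1}{a_{j+1}}\int_{l_j}^t \log^{a_{j+1}}(\tau/l_j)\,d\tau$, and for $t\ge l_{j+1}$ one restricts $\tau\in[(l_j/l_{j+1})t,\,t]\subset[l_j,t]$. On this interval $\tau/l_j\ge t/l_{j+1}$, so $\log(\tau/l_j)\ge\log(t/l_{j+1})$ \emph{pointwise} --- no power is taken of any ratio of logarithms. Hence
\begin{equation*}
\int_{l_j}^t \log^{a_{j+1}}(\tau/l_j)\,d\tau \;\ge\; \left(1-\frac{l_j}{l_{j+1}}\right)t\,\log^{a_{j+1}}(t/l_{j+1}),
\end{equation*}
valid for all $t\ge l_{j+1}$ with no side conditions. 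The prefactor $1-l_j/l_{j+1}=(l_{j+1}-l_j)/l_{j+1}\ge 2^{-(j+3)}$ together with $1/a_{j+1}\ge(p-1)/p^{j+2}$ produces exactly the constant $E(2p)^{-j}$ (with $E=C(p-1)/(2^3 3^2 p^2)$) that the closed form \eqref{ind_C_j} encodes. Your proposal replaces $\log(\tau/l_j)$ by $\log(\tau/l_{j+1})$ one step too early, thereby destroying the mechanism that the sequence $l_j\nearrow 2$ was introduced to provide; this is a genuine gap, not a matter of bookkeeping.
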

We omit the proof details, as it is exactly same as Proposition 5.3 in \cite{WY1801}, once we replace $F(t)$ by $\langle t\rangle^{\f m4}F(t)$.
Theorem \ref{Thm:critical} then can be finalized with same argument in \cite{WY1801}.

{\bf Acknowledgment}:
The first author is supported by NSFC No. 11501511 and Zhejiang Provincial Nature Science Foundation of China under Grant No. LQ15A010012.
The second author is supported by Zhejiang Provincial Nature Science Foundation of China under Grant No. LY18A010023.
\newpage
\bibliographystyle{elsarticle-num}
\bibliography{References}

\end{document}